\theoremstyle{plain}
\newtheorem{thm}{Theorem}[section]
\newtheorem{defn}[thm]{Definition}
\newtheorem{cor}[thm]{Corollary}
\newtheorem{lem}[thm]{Lemma}
\newtheorem{prop}[thm]{Proposition}
\numberwithin{equation}{section}
\newcommand{\dmn}{\mathop{\rm dom}}
\renewcommand{\Im}{\mathop{\rm Im}}
\newcommand{\supp}{\mathop{\rm supp}}
\renewcommand{\kappa}{\varkappa}
\newcommand{\Real}{\mathbb R}
\newcommand{\eps}{\varepsilon}
\newcommand{\en}{{\eps\nu}}
\newcommand{\el}{{\eps, \lambda\eps}}
\newcommand{\cI}{\mathcal{I}}
\newcommand{\cP}{\mathcal{P}}
\begin{document}

\title[1D Schr\"{o}dinger operators with short range interactions]
{1D Schr\"{o}dinger operators with short range interactions: two-scale regularization of distributional potentials}

\author{Yuriy Golovaty}%

\address{Department of Differential Equations,
  Ivan Franko National University of Lviv\\
  1 Universytetska str., 79000 Lviv, Ukraine}

\subjclass[2000]{Primary 34L40, 34B09; Secondary  81Q10}

\begin{abstract}
 For real $L_\infty(\Real)$-functions $\Phi$ and $\Psi$ of compact support, we prove the norm resolvent convergence, as $\eps$ and $\nu$ tend to $0$, of a family $S_\en$ of one-dimensional Schr\"odinger operators on the line of the form
 $$
    S_\en= -\frac{d^2}{dx^2}+\frac{\alpha}{\eps^2}\Phi\left(\frac{x}{\eps}\right)
    +\frac{\beta}{\nu}\Psi\left(\frac{x}{\nu}\right),
 $$
provided the ratio $\nu/\eps$ has a finite or infinite limit.
The limit operator $S_0$ depends on the shape of  $\Phi$ and $\Psi$ as well as on the limit of ratio $\nu/\eps$.
If the potential $\alpha\Phi$ possesses a zero-energy resonance,  then $S_0$ describes a non trivial  point interaction at the origin. Otherwise $S_0$ is the direct sum of the Dirichlet half-line Schr\"odinger operators.
\end{abstract}

\keywords{1D Schr\"{o}dinger operator, resonance,  short range interaction, point interaction, $\delta$-potential, $\delta'$-potential, distributional potential, solvable model, norm resolvent convergence}
\maketitle
%
%
%

\section{Introduction}
The present paper is concerned with  convergence of  the family of one-dimensional Schr\"{o}dinger operators of the form
\begin{equation}\label{Sen}
    S_\en= -\frac{d^2}{dx^2}+\frac{\alpha}{\eps^2}\Phi\left(\frac{x}{\eps}\right)
    +\frac{\beta}{\nu}\Psi\left(\frac{x}{\nu}\right), \quad
\dmn S_\en=W_2^2(\mathbb{R})
\end{equation}
as the positive parameters $\nu$ and $\eps$ tend to zero simultaneously.
Here $\Phi$ and $\Psi$ are  real  potentials of compact supports, and $\alpha$ and $\beta$ are real coupling constants.

Our motivation of the study on this convergence comes from
an application to the scattering of quantum particles by $\delta$- and $\delta'$-shaped potentials, where
$\delta$ is the Dirac delta-function.
The potential in \eqref{Sen} is a two-scale regularization of the distribution $\alpha\delta'(x)+\beta \delta(x)$
provided that the conditions
\begin{equation}\label{PhiPsiConds}
   \int_{\Real}\Phi(t)\,dt=0, \qquad \int_{\Real}t\Phi(t)\,dt=-1\quad\text{and}\quad \int_{\Real}\Psi(t)\,dt=1
\end{equation}
hold.
Our purpose is to construct the so-called solvable mo\-dels describing with admissible fidelity the real quantum interactions governed by the Hamiltonian $S_\en$.
The quantum mechanical models that are based on the concept of point inter\-actions reveal an undoubted effectiveness whenever solvability together with non
triviality is required. It is an extensive subject with a large literature (see e.g. \cite{AlbeverioGesztesyHoeghKrohnHolden2edition, AlbeverioKurasov}, and the references given therein).

We  emphasize that all results presented here concern  arbitrary potentials $\Phi$ and $\Psi$ of compact support, and the $(\alpha\delta'+\beta \delta)$-like potentials satisfying conditions \eqref{PhiPsiConds}  are only a special case
in our considerations, the title of paper notwithstanding.
It is interesting to observe that if the first condition in \eqref{PhiPsiConds} is not fulfilled, then
these potentials do not converge even in the distributional sense. However,  surprisingly  enough,
the resolvents of  $S_\en$ still converge in norm.

We say that the Schr\"odinger operator~$-\frac{d^2}{d t^2}+\alpha\Phi$ in $L_2(\Real)$ possesses a \emph{half-bound state} (or \emph{zero-energy resonance}) if there exists a non trivial solution~$u_\alpha$ to the equation $-u'' +\alpha\Phi u= 0$
that is bounded on the whole line. The potential $\alpha\Phi$ is then called \emph{resonant}. In this case, we also say that $\alpha$ is a \textit{resonant coupling constant} for the potential $\Phi$.
Such a solution~$u_\alpha$ is  unique up to a scalar factor and has nonzero limits $u_\alpha(\pm\infty)=\lim_{x\to\pm\infty}u_\alpha(x)$ (see \cite{BolleGesztesyKlaus:1987, Klaus:1982}).
Our main result reads as follows.
\medskip

{\it
Let $\Phi$ and $\Psi$ be bounded real functions of compact support. Then the operator family $S_\en$ given by \eqref{Sen} converges as $\nu, \eps\to0$  in the norm resolvent sense, i.e., the resolvents $(S_\en-z)^{-1}$ converge in the uniform operator topology, provided the ratio $\nu/\eps$ has a finite or infinite limit.

\emph{Non-resonant case.}
If the potential $\alpha\Phi$ does not possess a zero-energy resonance, then the operators $S_\en$ converge
to the direct sum $S_-\oplus S_+$ of the Dirichlet half-line Schr\"odinger operators~$S_\pm$.

\emph{Resonant case.}
If the potential $\alpha\Phi$ is resonant with the half-bound state $u_\alpha$, then the limit operator $S$ is a perturbation of the free Schr\"odinger operator defined by $S\phi=-\phi''$
on functions $\phi$ in~$W_2^2(\Real\setminus\{0\})$, subject to the  boundary conditions at the origin
\begin{equation}\label{PointInteractionConds}
    \begin{pmatrix}
         \phi(+0)\\ \phi'(+0)
    \end{pmatrix} =
    \begin{pmatrix}
            \theta_\alpha(\Phi) & 0 \\
            \beta\,\omega_\alpha(\Phi,\Psi) & \theta_\alpha(\Phi)^{-1}
    \end{pmatrix}
    \begin{pmatrix}
      \phi(-0)\\ \phi'(-0)
    \end{pmatrix}.
\end{equation}
The diagonal matrix element $\theta_\alpha(\Phi)$ is specified by the half-bound state of potential $\alpha\Phi$, and is defined by
\begin{equation}\label{Theta}
    \theta_\alpha(\Phi)=\frac{u_\alpha^+}{u_\alpha^-},
\end{equation}
where $u_\alpha^\pm=u_\alpha(\pm\infty)$. The value $\omega_\alpha(\Phi,\Psi)$ depends on both potentials $\Phi$ and $\Psi$
as well as on the limit of ratio $\nu/\eps$ as $\nu, \eps\to 0$, and describes different kinds of the resonance interaction between the potentials $\Phi$ and $\Psi$.
 Three cases are to be distinguished:
\begin{itemize}
  \item[(i)] if $\nu/\eps\to\infty$ as $\nu, \eps\to 0$, then
\begin{equation}\label{OmegaZero}
    \omega_\alpha(\Phi,\Psi)=\frac{u_\alpha^+}{u_\alpha^-}\,\int_{\Real_+}\kern-4pt\Psi(t)\,dt
    +\frac{u_\alpha^-}{u_\alpha^+}\,\int_{\Real_-}\kern-4pt\Psi(t)\,dt;
\end{equation}
  \item[(ii)] if the ratio $\nu/\eps$ converges to a finite positive number $\lambda$ as $\nu, \eps\to 0$, then
  \begin{equation}\label{OmegaFinite}
    \omega_\alpha(\Phi,\Psi)=  \frac{1}{u_\alpha^-\,u_\alpha^+}\, \int_\Real\Psi(t)\, u^2_\alpha(\lambda t)\,dt;
\end{equation}
  \item[(iii)] if $\nu/\eps\to 0$ as $\nu$ and $\eps$ go to zero, then
\begin{equation}\label{OmegaInfty}
    \omega_\alpha(\Phi,\Psi)=  \frac{u^2_\alpha(0)}{u_\alpha^-\,u_\alpha^+} \,\int_\Real\Psi(t)\,dt.
\end{equation}
\end{itemize}
}

\medskip

The point interaction generated by  conditions \eqref{PointInteractionConds}
may be regarded as the first approximation to the real interaction governed by the Hamiltonian $S_\en$ with  coupling constants $\alpha$ lying in vicinity of the resonant values.
The explicit relations between the matrix entries $\theta_\alpha(\Phi)$, $\omega_\alpha(\Phi,\Psi)$ and the potentials $\Phi$, $\Psi$ make it possible to carry out a quantitative analysis of this quantum system, e.g. to compute approximate values of the scattering data. Of course the same conclusion holds in the non-resonant case, but then the quantum dynamics is asymptotically trivial.

It is natural to ask what happens if one of the coupling constants is zero, and the family $S_\en$ becomes one-parametric.
For if $\beta=0$, and so the $\delta$-like component of the short range potential is absent, then the results are in agreement with the results obtained recently in \cite{GolovatyHryniv:2010, GolovatyHryniv:2011}:
the operators
\begin{equation}\label{OprSe}
S_\eps= -\frac{d^2}{dx^2}+\frac{\alpha}{\eps^2}\Phi\left(\frac{x}{\eps}\right), \quad
\dmn S_\eps=W_2^2(\mathbb{R})
\end{equation}
converge as $\eps\to0$ in the norm resolvent sense  to the operator $S$ defined  by
conditions \eqref{PointInteractionConds} with $\beta=0$, if $\alpha\Phi$ possesses a zero-energy resonance, and to the direct sum~$S_-\oplus S_+$ otherwise.
As for the case $\alpha=0$, the limit Hamiltonian, as $\nu\to 0$, must be associated with  the $\beta\delta(x)$-interaction. However, we see at once that zero is a resonant coupling constant for any potential $\Phi$, and the half-bound state $u_0$ is a constant function. Therefore $\theta_0(\Phi)=1$, and $\omega_0(\Phi,\Psi)=\int_\Real\Psi\,dt$, no matter which a formula of \eqref{OmegaZero}--\eqref{OmegaInfty} we use.
Hence, the operator $S$ is defined by the boundary conditions
\begin{equation*}
\phi(+0)=\phi(-0),\qquad \phi'(+0)=\phi'(-0)+\beta\phi(0)\int_\Real\Psi\,dt,
\end{equation*}
as one should expect.

It has been believed for a  long time \cite{SebRMP:1986} that the  Hamiltonians $S_\eps$ given by \eqref{OprSe} with  $\alpha\neq 0$ converge as $\eps\to 0$ in the norm resolvent sense to the direct sum
$S_-\oplus S_+$ of the Dirichlet half-line Schr\"odinger operators for any potential $\Phi$ having zero mean.
If so, the $\delta'$-shaped potential defined through the regularization $\varepsilon^{-2}\Phi(\varepsilon^{-1}\,\cdot\,)$ must be opaque, i.e., acts as a perfect wall, in the limit $\eps\to 0$.
However, the numerical ana\-lysis of exactly solvable models of $S_\eps$ with piece-wise constant~$\Phi$ of compact support performed recently by Zolotaryuk a.o.~\cite{ChristianZolotarIermak03,Zolotaryuk08,Zolotaryuk09,Zolotaryuk10} gives rise to doubts that the limit $S_-\oplus S_+$ is correct.
The authors demonstrated that for a resonant~$\Phi$, the limiting value of the transmission coefficient of~$S_\eps$ is different from zero. The operators $S_\eps$
also arose in  \cite{AlbeverioCacciapuotiFinco:2007,CacciapuotiExner:2007, CacciapuotiFinco:2007} in connection with the approximation of  smooth planar quantum waveguides by  quantum graphs. Under the assumption that the mean value of $\Phi$ is different from zero, the authors singled out the set of resonant potentials~$\Phi$ producing a ``non-trivial'' (i.e., different from $S_-\oplus S_+$) limit of $S_\eps$ in the norm resolvent sense (see also the recent preprint \cite{Cacciapuoti:2011}). A similar resonance phenomenon  was also obtained in \cite{GolovatyManko:2009}, where the asymptotic behaviour of eigenvalues for the Schr\"{o}dinger operators perturbed by $\delta'$-like short range potentials was treated  (see also \cite{MankoJPA:2010}).
The situation with these controversial results was clarified in \cite{GolovatyHryniv:2010,GolovatyHryniv:2011}.
Note that \v{S}eba was the first \cite{SebaHalfLine:1985} who discovered  the ``resonant convergence'' for a similar family of the Dirichlet Schr\"{o}dinger operators
on the half-line.

There is a connection between the results presented here and the low energy behaviour  of Schr\"{o}dinger operators, in particular   the low-energy scattering theory.
Generally, the zero-energy resonances are the reason for different ``exceptional'' cases of the asymptotic behaviour.
Albeverio and H{\o}egh-Krohn~\cite{AlbeverioHoeghKrohn:1981} considered the family
of Hamiltonians $H_\eps=-\Delta + \lambda(\eps)\eps^{-2}V(\eps^{-1}x)$ in dimension three, where $\lambda(\eps)$ was a smooth function with $\lambda(0)=1$ and $\lambda'(0)\ne0$.
It was shown that $H_\eps$  converge  in the strong resolvent sense,  as $\eps\to0$, to the operator that is either the free Hamiltonian $-\Delta$ or its perturbation by a delta-function depending on whether or not there is a zero-energy resonance for $-\Delta+V$.
In~\cite{AlbeverioGesztesyHoeghKrohn:1982}, the low-energy scattering was discussed; the authors used the results of~\cite{AlbeverioHoeghKrohn:1981} and the connection between the low-energy behaviour of scattering matrix for the Hamiltonian $-\Delta + V$ in~$L_2(\Real^3)$ and for the corresponding scaled Hamiltonians
$-\Delta + \eps^{-2} V(\eps^{-1}x)$ as $\eps \to 0$ to study in detail possible resonant and non-resonant cases. Similar problem for Hamiltonians including the Coulomb-type interaction was treated in~\cite{AlbeverioGesztesyHoeghKrohnStreit:1983}.
The low-energy scattering for the one-dimensional Schr\"odinger operator $S_1$ and its connection to the behaviour of the corresponding scaled operators $S_\eps$ as $\eps\to0$ was thoroughly investigated by Boll\'e, Gesztesy, Klaus, and Wilk \cite{BolleGesztesyWilk:1985,BolleGesztesyKlaus:1987}, taking into  account the  possibility
of zero-energy resonances; in dimension two, the low-energy asymptotics was discussed in~\cite{BolleGesztesyDanneels:1988}. Continuity of the scattering matrix at zero energy for one-dimensional Schr\"odinger operators in the resonant case was established by Klaus in~\cite{Klaus:1988}. Relevant re\-ferences in this context are also \cite{AktosunKlaus:2001, DeiftTrubowitz:1979}.
Simon and Klaus  \cite{KlausSimonI:1980,KlausSimonII:1980, Klaus:1982} observed the connection between the  zero-energy resonances and the coupling  constant thresholds, i.e., the absorbtion  of eigenvalues.
These  results  depend  on  properties of  the corresponding  Birman-Schwinger  kernel.

Singular point interactions for the Schr\"odinger operators in dimensions one and higher have widely been discussed in both the physical and mathematical literature; see \cite{BrascheFigariTeta,ExnerNeidhardtZagrebnov,IsmagilovKostyuchenko:2010, Nizhik:2006FAA, BrascheNizhnik:2011, KostenkoMalamud:2010}.
It is worth to note that the considerable progress in theory of Schr\"{o}dinger operators with distributional potentials belonging to the Sobolev space $W_2^{-1}$ is due to Shkalikov, Savchuk \cite{ShkalikovSavchukMatNotes1999, ShkalikovSavchuk:2003TMMO}, and Mikhailets, Goriunov, and  Molyboga \cite{MikhailetsMolyboga:2008, MikhailetsMolyboga:2009, GoriunovMikhailetsMN:2010, GoriunovMikhailetsMFAT:2010}.


\section{Preliminaries}

There is no loss of generality in supposing that the supports of both $\Phi$ and $\Psi$ are contained  in the interval $\cI=[-1,1]$.
Denote by $\mathcal{P}$  the class of real-valued  bounded functions of compact support contained in $\cI$.

\begin{defn}
The \textit{resonant set} $\Lambda_\Phi$ of a potential $\Phi\in \mathcal{P}$ is the set of all real value $\alpha$ for which the operator $-\frac{d^2}{d t^2}+\alpha \Phi$ in $L_2(\Real)$ possesses a half-bound state, i.e., for which
there exists a non trivial $L_\infty(\Real)$-solution~$u_\alpha$ to the equation
\begin{equation}\label{EqUalpha}
- u'' +\alpha\Phi u= 0.
\end{equation}
\end{defn}

The half-bound state $u_\alpha$ is then constant outside the support of $\Phi$.
Moreover, the restriction of $u_\alpha$ to $\cI$ is a nontrivial solution of the Neumann boundary value problem
\begin{equation}\label{NeumanProblemWithAlpha}
     - u'' +\alpha \Phi u= 0, \quad t\in \cI,\qquad u'(-1)=0, \quad u'(1)=0.
\end{equation}
Consequently, for any $\Phi\in \cP$  the resonant set $\Lambda_\Phi$ is not empty and coincides with the set of all eigenvalues of  the latter problem with respect to the spectral parameter $\alpha$.
In the case of a nonnegative (resp. nonpositive) potential $\Phi$ the spectrum of  \eqref{NeumanProblemWithAlpha}
is discrete and simple with one accumulation point at $-\infty$ (resp. $+\infty$).
Otherwise, \eqref{NeumanProblemWithAlpha} is a  problem with  indefinite  weight function $\Phi$, and has a discrete and simple spectrum with two accumulation points at $\pm\infty$ \cite{CurgusLangerJDE:1989}.

We introduce some characteristics of the potentials $\Phi$ and $\Psi$.
Let $\theta$  be the map of $\Lambda_\Phi$ to $\Real$ defined by
\begin{equation*}
\theta(\alpha)=\frac{u_\alpha^+}{u_\alpha^-}=\frac{u_\alpha(+1)}{u_\alpha(-1)}.
\end{equation*}
Since the half-bound state is  unique up to a scalar factor, this map is well defined.
Throughout the paper, we choose the half-bound state so that $u_\alpha(x)=1$ for  $x\leq-1$.
Then $\theta(\alpha)=u_\alpha^+$, and  $u_\alpha(x)=\theta(\alpha)$ for  $x\geq1$.
Here and subsequently, $\theta_\alpha$ stands for the value $\theta(\alpha)$.
For our purposes it is convenient to introduce the maps:
\begin{align}
\label{MapZeta}
    &\zeta\colon \Lambda_\Phi\to \Real,
    &&\zeta(\alpha)=\theta_\alpha\int_{\Real_+}\kern-4pt \Psi\,dt+
    \theta_\alpha^{-1}\int_{\Real_-}\kern-4pt \Psi\,dt;\\
\label{MapKappa}
    &\kappa\colon \Lambda_\Phi\times \Real_+\to \Real,
    &&\kappa(\alpha,\lambda)= \theta_\alpha^{-1} \int_\Real\Psi(t)\, u^2_\alpha(\lambda t)\,dt;\\
\label{MapMu}
    &\mu\colon \Lambda_\Phi\to \Real,
    &&\mu(\alpha)=  \theta_\alpha^{-1}u^2_\alpha(0) \int_\Real\Psi\,dt
\end{align}
(compare with \eqref{OmegaZero}--\eqref{OmegaInfty}).

Denote by $S(\gamma_1,\gamma_2)$ a perturbation of the free Schr\"odinger operator acting via $S(\gamma_1,\gamma_2)\phi=-\phi''$
on functions $\phi$ in~$W_2^2(\Real\setminus\{0\})$ obeying the  interface conditions
$\phi(+0) = \gamma_1 \phi(-0)$ and $\phi'(+0) = \gamma_1^{-1} \phi'(-0)+\gamma_2 \phi(-0)$ at the origin.
For every real $\gamma_1$ and $\gamma_2$, this operator is  self-adjoint
provided $\gamma_1\neq 0$.
Let $S_\pm$  denote  the unperturbed half-line Schr\"odinger operator
$S_\pm= -d^2/d x^2$ on~$\Real_\pm$, subject to the Dirichlet boundary condition at the origin, i.e.,
$$
    \dmn S_\pm = \{ \phi \in W_2^2(\Real_\pm) \colon \phi(0)=0\}.
$$

In the sequel, letters $C_j$ and $c_j$  denote various posi\-ti\-ve constants independent of~$\eps$ and $\nu$, whose values might be different in different proofs. Throughout the paper, $W_2^l(\Omega)$ stands for the Sobolev space and $\|f\|$ stands for the $L_2(\Real)$-norm of a function~$f$.

We start with  an easy auxiliary result, which  will be often used below.
\begin{prop}\label{PropEstYoverF}
Assume $f\in L_2(\Real)$, $z\in \mathbb{C}\setminus\Real$, and set $y=(S(\gamma_1,\gamma_2)-z)^{-1}f$.
Then the following holds  for some constants $C_k$ independent of $f$ and  $t$:
\begin{align}\label{EstY(pm0)}
    &|y(\pm 0)|\leq C_1\|f\|, && |y'(\pm 0)|\leq C_2\|f\|\\\label{EstY(t)-Y(0)}
&\bigr|y(\pm t)-y(\pm 0)\bigl|\leq C_3t\|f\|, &&
\bigr|y'(\pm t)-y'(\pm 0)\bigl|\leq C_4 t^{1/2}\|f\|
\end{align}
for $t>0$. These inequalities  hold also for $y=(S_-\oplus S_+-z)^{-1}f$.
\end{prop}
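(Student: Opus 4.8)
The plan is to build everything up from the three $L_2$-bounds $\|y\|\le C\|f\|$, $\|y'\|\le C\|f\|$ and $\|y''\|\le C\|f\|$, and then to extract all the pointwise statements from the one-dimensional trace inequality
\begin{equation*}
|g(0)|^2\le 2\,\|g\|_{L_2(\Real_\pm)}\,\|g'\|_{L_2(\Real_\pm)},\qquad g\in W_2^1(\Real_\pm),
\end{equation*}
together with the fundamental theorem of calculus. For the first bound I would simply invoke self-adjointness of $S(\gamma_1,\gamma_2)$: since $z\notin\Real$, the standard resolvent estimate gives $\|y\|\le|\Im z|^{-1}\|f\|$.

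The heart of the matter is the bound on $\|y'\|$. Integrating by parts on each half-line in $\langle S(\gamma_1,\gamma_2)y,y\rangle$ and inserting the interface conditions $y(+0)=\gamma_1 y(-0)$, $y'(+0)=\gamma_1^{-1}y'(-0)+\gamma_2 y(-0)$, one finds that the boundary terms at the origin collapse to
\begin{equation*}
\langle S(\gamma_1,\gamma_2)y,y\rangle=\|y'\|^2+\gamma_1\gamma_2\,|y(-0)|^2.
\end{equation*}
Taking the real part of the identity $\langle S(\gamma_1,\gamma_2)y,y\rangle-z\|y\|^2=\langle f,y\rangle$ then expresses $\|y'\|^2$ through $\|y\|$, $\|f\|$ and the term $\gamma_1\gamma_2|y(-0)|^2$. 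The main obstacle is that this boundary term can be negative, so the energy identity does not immediately control $\|y'\|$. I would absorb it by applying the trace inequality in the form $|y(-0)|^2\le 2\|y\|\,\|y'\|$ and then Young's inequality, moving the resulting $\tfrac12\|y'\|^2$ to the left-hand side; combined with $\|y\|\le|\Im z|^{-1}\|f\|$ this yields $\|y'\|\le C\|f\|$. The bound on $\|y''\|$ is then immediate from the differential equation $y''=-zy-f$, valid on $\Real\setminus\{0\}$.

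With the three $L_2$-bounds in hand, the trace inequality applied to $y$ and to $y'$ on each half-line gives $|y(\pm0)|^2\le 2\|y\|\,\|y'\|$ and $|y'(\pm0)|^2\le 2\|y'\|\,\|y''\|$, whence \eqref{EstY(pm0)}. For the increments, Cauchy--Schwarz yields
\begin{equation*}
|y'(\pm t)-y'(\pm0)|=\Bigl|\int_0^{\pm t}y''\,ds\Bigr|\le t^{1/2}\,\|y''\|\le C_4\,t^{1/2}\|f\|,
\end{equation*}
which is the second inequality in \eqref{EstY(t)-Y(0)}. This bound, together with the already-established $|y'(\pm0)|\le C\|f\|$, shows that $|y'(\pm s)|\le C\|f\|$ uniformly for $s$ in a bounded range, so integrating once more gives $|y(\pm t)-y(\pm0)|\le\int_0^t|y'|\,ds\le C_3\,t\|f\|$ there; for $t\ge1$ both increments are controlled trivially, since $y,y'\to0$ at infinity and are bounded by $C\|f\|$ through the same trace inequality.

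Finally, the case $y=(S_-\oplus S_+-z)^{-1}f$ is covered verbatim and is in fact simpler: the half-lines decouple, the Dirichlet condition $y(\pm0)=0$ removes the boundary term in the energy identity altogether, and the same trace and integration arguments apply on each $\Real_\pm$ separately.
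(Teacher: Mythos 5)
Your proof is correct, but it follows a genuinely different route from the paper's. The paper argues softly: the resolvent maps $L_2(\Real)$ boundedly into $\dmn S(\gamma_1,\gamma_2)$ equipped with the graph norm, that domain is continuously embedded in $W_2^2(\Real\setminus\{0\})$, and the Sobolev embedding theorem then gives $\|y\|_{C^1(\Real\setminus\{0\})}\leq c\|f\|$; the increment bounds follow, exactly as in your last step, from the fundamental theorem of calculus together with Cauchy--Schwarz for the $t^{1/2}$ rate. You instead establish the $W_2^2$-control by hand: $\|y\|\leq|\Im z|^{-1}\|f\|$ from self-adjointness, $\|y'\|$ from the quadratic-form identity $\langle S(\gamma_1,\gamma_2)y,y\rangle=\|y'\|^2+\gamma_1\gamma_2|y(-0)|^2$ (your computation of the interface boundary term is correct, and absorbing the possibly negative term via $|y(-0)|^2\leq 2\|y\|\,\|y'\|$ and Young's inequality is exactly what is needed), and $\|y''\|$ directly from $y''=-zy-f$ on $\Real\setminus\{0\}$; the one-dimensional trace inequality then plays the role of the Sobolev embedding for the pointwise bounds. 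What the paper's argument buys is brevity, at the price of invoking without proof the embedding of the graph-norm domain into $W_2^2(\Real\setminus\{0\})$; your argument is longer but self-contained, it effectively proves that embedding, and it makes the dependence of the constants on $z$, $\gamma_1$, $\gamma_2$ explicit. Two small points you should make explicit: the boundary terms at $\pm\infty$ in the integration by parts vanish because functions in $W_2^2(\Real_\pm)$ and their first derivatives tend to zero at infinity, and the case split $t\leq 1$ versus $t\geq 1$ in the increment estimates (using the uniform bounds $|y|,|y'|\leq C\|f\|$ for large $t$) deserves a sentence rather than the phrase ``controlled trivially.''
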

\begin{proof}
We first observe that $(S(\gamma_1,\gamma_2)-z)^{-1}$ is a bounded operator from~$L_2(\Real)$ to the domain of~$S(\gamma_1,\gamma_2)$ equipped with the graph norm. The latter space is continuously embedded  subspace into $W_2^2(\Real\setminus \{0\})$. Then $ \|y\|_{W_2^2(\Real\setminus \{0\})}\leq c_1\|f\|$. Owing to the Sobolev embedding theorem, we have
 $\|y\|_{C^1(\Real\setminus \{0\})}\leq c_2\|f\|$,
which  establishes  \eqref{EstY(pm0)}. Combining the previous estimates for $y$ with the inequalities
\begin{equation*}
\bigr|y^{(j)}(\pm t)-y^{(j)}(\pm 0)\bigl|\leq \left|\int_0^{\pm t}|y^{(j+1)}(s)|\,ds\right|,\quad j=0,1,
\end{equation*}
we obtain \eqref{EstY(t)-Y(0)}. For the case of $S_-\oplus S_+$, the proof is similar.
\end{proof}

Apparently, some versions of  the next proposition are known, but  we  are at a loss to give a precise reference.

\begin{prop}\label{PropCauhyProblEst}
Let $J$ be a finite interval in $\Real$, and $t_0\in J$.
Then the solution  to the Cauchy problem
    $v''+qv=f$ in $J$, $v(t_0)=a$, $v'(t_0)=b$
obeys the estimate
$$
    \|v\|_{C^1(J)}\leq C(|a|+|b|+\|f\|_{L_\infty(J)})
$$
for some $C>0$ being independent of the initial data and right-hand side, whenever $q, f\in L_\infty(J)$.
\end{prop}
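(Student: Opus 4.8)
The plan is to recast the scalar equation as a first-order linear system and then invoke a Gronwall argument. Setting $w=(v,v')^{\top}$, the equation $v''+qv=f$ becomes $w'=A(t)w+F(t)$ with
$$
A(t)=\begin{pmatrix} 0 & 1 \\ -q(t) & 0 \end{pmatrix},
\qquad F(t)=\begin{pmatrix} 0 \\ f(t) \end{pmatrix},
$$
subject to $w(t_0)=(a,b)^{\top}$. Since $q,f\in L_\infty(J)$, Carath\'eodory's existence and uniqueness theorem produces a solution with $v\in C^1(J)$ and $v'$ absolutely continuous, so that the system may be integrated to give the Volterra integral equation $w(t)=w(t_0)+\int_{t_0}^{t}\bigl(A(s)w(s)+F(s)\bigr)\,ds$ for $t\in J$.

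Next I would estimate in the Euclidean norm $|\cdot|$ on $\Real^2$. Writing $M=\|q\|_{L_\infty(J)}$, the operator norm of $A(s)$ is bounded by $1+M$ uniformly in $s$, while $|F(s)|\le\|f\|_{L_\infty(J)}$. Taking norms in the integral equation and applying the triangle inequality (reading the integral with the correct orientation, since $t$ may lie on either side of $t_0$) yields
$$
|w(t)|\le \bigl(|a|+|b|\bigr)+|J|\,\|f\|_{L_\infty(J)}
+(1+M)\left|\int_{t_0}^{t}|w(s)|\,ds\right|,
$$
where $|J|$ denotes the length of $J$.

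The core step is then Gronwall's inequality, which gives
$$
|w(t)|\le\Bigl(\bigl(|a|+|b|\bigr)+|J|\,\|f\|_{L_\infty(J)}\Bigr)\,e^{(1+M)|J|}
$$
for every $t\in J$. Because $|J|$ and $M$ are fixed quantities determined by the interval $J$ and the coefficient $q$, the choice $C=\max\{1,|J|\}\,e^{(1+M)|J|}$ gives $\sup_{J}|w|\le C\bigl(|a|+|b|+\|f\|_{L_\infty(J)}\bigr)$. Since $\|v\|_{C^1(J)}=\max\{\|v\|_{C(J)},\|v'\|_{C(J)}\}\le\sup_{J}|w|$, the asserted estimate follows.

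There is no serious obstacle here; the only points demanding care are the Carath\'eodory-sense interpretation of the equation when $q$ and $f$ are merely bounded (so that the integral equation is legitimate and $v$ is genuinely $C^1$), and the bookkeeping of the integration limits, so that the Gronwall bound is applied separately on the two components of $J\setminus\{t_0\}$ and then combined.
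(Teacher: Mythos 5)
Your proof is correct, and it takes a genuinely different route from the paper's. The paper argues via variation of parameters: it introduces the fundamental solutions $v_1,v_2$ of the homogeneous equation $v''+qv=0$ normalized at $t_0$, notes that $q\in L_\infty(J)$ forces $v_j\in W_2^2(J)\subset C^1(J)$ by Sobolev embedding, and writes the solution in the explicit form \eqref{CPSolRepresentation}, namely $v(t)=av_1(t)+bv_2(t)+\int_{t_0}^{t}k(t,s)f(s)\,ds$ with $k(t,s)=v_1(s)v_2(t)-v_1(t)v_2(s)$; the $C^1$ bound then follows by estimating $\|v_1\|_{C^1(J)}$, $\|v_2\|_{C^1(J)}$ and $\|k\|_{C^1(J\times J)}$. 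You instead rewrite the equation as a first-order system, pass to the Volterra integral identity, and close the estimate with Gronwall's inequality on each side of $t_0$. Both arguments are complete: your bound $\|A(s)\|\le 1+M$ is valid (the exact operator norm is $\max\{1,|q(s)|\}$), and the two-sided bookkeeping around $t_0$ is handled correctly. What each approach buys is different. Yours produces an explicit constant $C=\max\{1,|J|\}\,e^{(1+M)|J|}$ that depends on $q$ only through $\|q\|_{L_\infty(J)}$, hence is uniform over bounded sets of potentials, and it requires no Sobolev embedding, only the Carath\'eodory framework. The paper's choice is not an accident, though: the representation \eqref{CPSolRepresentation} and the kernel $k(t,s)$ are not mere devices for this proposition but are reused later as the main technical tool --- Lemma~\ref{LemmaVenProperties} analyzes the growth of $k(t,s)$ on the whole line, and Lemmas~\ref{LemmaGenPropertiesInfty} and~\ref{LemmaVenPropertiesInfty} invoke the same formula again --- so the variation-of-parameters proof simultaneously sets up machinery the rest of the paper depends on, which a pure Gronwall argument would not provide.
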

\begin{proof}
Let $v_1$ and $v_2$ be the linear independent solutions to $v''+qv=0$ such that
$v_1(t_0)=1$, $v'_1(t_0)=0$, $v_2(t_0)=0$ and $v'_2(t_0)=1$. Under the assumptions made on $q$ and $f$, these solutions belong to $W_2^2(J)$; and consequently $v_j\in C^1(J)$ by the Sobolev embedding theorem.  Application of the variation of parameters method  yields
     \begin{equation}\label{CPSolRepresentation}
        v(t)=a v_1(t)+b v_2(t)+\int_{t_0}^tk(t,s)f(s)\,ds,
     \end{equation}
where $k(t,s)=v_1(s)v_2(t)-v_1(t)v_2(s)$.
From this and the representation of the first derivative
     \begin{equation*}
        v'(t)=a v'_1(t)+b v'_2(t)+\int_{t_0}^t \frac{\partial k}{\partial t}(t,s)f(s)\,ds
     \end{equation*}
we have
\begin{equation*}
    |v(t)|+|v'(t)|\leq |a|\|v_1\|_{C^1(J)}+|b|\|v_2\|_{C^1(J)}+2|J|\,\|k\|_{C^1(J\times J)}\|f\|_{L_\infty(J)}
\end{equation*}
for $t\in J$, which completes the proof.
\end{proof}

We end this section with a proposition which will be useful in Sections~\ref{SecZero} and~\ref{SecInfty}.
Denote by $[\,\cdot\,]_b$  the jump of a function at the point $x=b$.
\begin{prop}\label{PropW22Corrector}
Let $\Real_a$ be the real line with two removed points $-a$ and $a$, i.e., $\Real_a=\Real\setminus \{-a,a\}$.
    Assume  $w\in W_2^2(\Real_a)$. There exists a function $r\in C^\infty(\Real_a)$ such that   $w+r$ belongs to $W_2^2(\Real)$, $r$ is zero in $(-a,a)$, and
    \begin{equation}\label{REst}
        \max_{x\in \Real_a}|r^{(k)}(x)|\leq C \Bigl(\left|[w]_{-a}\right|+\left|[w]_{a}\right|+\left|[w']_{-a}\right|+\left|[w']_{a}\right|\Bigr)
    \end{equation}
    for $k=0,1,2$, where the constant $C$ does not depend on $w$ and $a$.
\end{prop}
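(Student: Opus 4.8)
The plan is to construct the corrector $r$ explicitly, by gluing two rigidly translated copies of fixed bump profiles — one attached to the singular point $a$ and one to $-a$ — and then to read the estimate \eqref{REst} off the translation invariance of the $C^k$-norms. Before doing so I would record that the jumps in \eqref{REst} are well defined: since $w\in W_2^2(\Real_a)$, its restriction to each of the intervals $(-\infty,-a)$, $(-a,a)$, $(a,\infty)$ lies in $W_2^2$ and hence, by the Sobolev embedding theorem, is $C^1$ up to the endpoints, so the one-sided limits $w(\pm a\pm0)$ and $w'(\pm a\pm0)$ exist and $[w]_{\pm a}$, $[w']_{\pm a}$ make sense.

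Next I would fix, once and for all, two functions $\phi,\psi\in C^\infty(\Real)$ with supports in $(-1,1)$ such that $\phi(0)=1$, $\phi'(0)=0$, $\psi(0)=0$, $\psi'(0)=1$, and set
\begin{equation*}
r(x)=\begin{cases}
-[w]_a\,\phi(x-a)-[w']_a\,\psi(x-a), & x>a,\\
0, & -a<x<a,\\
[w]_{-a}\,\phi(x+a)+[w']_{-a}\,\psi(x+a), & x<-a.
\end{cases}
\end{equation*}
By construction $r\in C^\infty(\Real_a)$ and $r\equiv0$ on $(-a,a)$. The two outer pieces are supported in $(a,a+1)$ and $(-a-1,-a)$ respectively, so they never enter the middle interval nor overlap each other, for \emph{any} value of $a$. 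Computing the one-sided limits at the singular points gives $r(a+0)=-[w]_a$, $r'(a+0)=-[w']_a$, $r(-a-0)=[w]_{-a}$, $r'(-a-0)=[w']_{-a}$, while $r$ and $r'$ vanish from the side of the middle interval; hence the jumps of $w+r$ and of $(w+r)'$ at both $\pm a$ cancel to zero.

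To conclude that $w+r\in W_2^2(\Real)$ I would invoke the standard gluing fact: a function lying in $W_2^2$ on two adjacent intervals, whose value and first derivative agree at the common endpoint, belongs to $W_2^2$ on the union, since its weak derivatives then carry no singular contribution at the junction. Applying this at $-a$ and at $a$, together with $w\in W_2^2(\Real_a)$ and the smoothness and compact support of $r$, yields $w+r\in W_2^2(\Real)$. For the bound, on the support of each nonzero piece and for $k=0,1,2$ one has $|r^{(k)}(x)|\le |[w]_{\pm a}|\,\|\phi^{(k)}\|_{L_\infty}+|[w']_{\pm a}|\,\|\psi^{(k)}\|_{L_\infty}$, whence \eqref{REst} holds with $C=\max_{0\le k\le2}\bigl(\|\phi^{(k)}\|_{L_\infty}+\|\psi^{(k)}\|_{L_\infty}\bigr)$.

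The only point deserving real attention — exactly the one the statement stresses by demanding that $C$ be independent of $a$ — is this final estimate. Because $\phi$ and $\psi$ are fixed profiles and $r$ uses only their rigid translates by $\pm a$, the sup-norms of their derivatives do not change with $a$, and the supports automatically stay apart. So I expect no genuine analytic obstacle; the entire difficulty is organizational, namely arranging the corrector as a pure translate of $a$-independent profiles rather than, say, rescaling a bump to the gap width $2a$, which would destroy the uniformity in $a$.
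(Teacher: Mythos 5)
Your proof is correct and is essentially the paper's own argument: the paper likewise builds $r$ from rigid translates of fixed profile functions $\varphi,\psi$ (normalized by $\varphi(+0)=1$, $\varphi'(+0)=0$, $\psi(+0)=0$, $\psi'(+0)=1$) attached at $\pm a$ with coefficients equal to the jumps of $w$ and $w'$, so that $w+r$ and its first derivative become continuous and the bound \eqref{REst} follows with an $a$-independent constant from the fixed sup-norms of the profiles. The only cosmetic difference is that the paper writes $r$ in one closed formula using profiles supported in $[0,\infty)$, whereas you define it piecewise from globally smooth bumps; the two constructions coincide in substance.
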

\begin{proof}
Let us introduce functions $\varphi$ and $\psi$ that are smooth outside the origin, have compact supports contained in $[0,\infty)$, and $\varphi(+0)=1$, $\varphi'(+0)=0$, $\psi(+0)=0$, $\psi'(+0)=1$.
Set
\begin{equation}\label{CorrectoR}
r(x)=[w]_{-a}\, \varphi(-x-a)-[w']_{-a}\,\psi(-x-a)-[w]_{a}\,\varphi(x-a)-[w']_{a}\,\psi(x-a).
\end{equation}
All jumps are well defined, since $w\in C^1(\Real_a)$.
Next, the function $r$ is zero in $(-a,a)$ by construction.
An easy computation shows that  $w+r$
is continuous on~$\Real$ along with its derivative and consequently belongs to $W_2^2(\Real)$. Finally, \eqref{CorrectoR} makes it obvious that inequality \eqref{REst} holds.
\end{proof}


\section{Convergence of the operators $S_\en$. The case  $\nu\eps^{-1}\to \infty$.}\label{SecZero}

In this section, we analyze the case of a ``$\delta$-like'' sequence that is slowly contracting  relative to ``$\delta'$-like'' one. The relations between two parameters $\eps$ and $\nu$ that lead to this case are, roughly speaking, as follows: $\eps\ll 1$, $\nu \ll 1$, but $\nu/\eps\gg 1$.  It will be convenient to introduce the large parameter~$\eta=\nu/\eps$.
The  first trivial observation is the following: if $\nu\to 0$ and $\eta\to \infty$, then $\eps\to 0$.
The resonant and non-resonant cases will be considered separately.

\subsection{Resonant case}
We start with the analysis of the more difficult resonant case.
Suppose that $\alpha\in \Lambda_\Phi$ and set $\zeta_\alpha=\zeta(\alpha)$, where $\zeta$ is given by \eqref{MapZeta}.

\begin{thm}\label{ThmCaseEpsNu-1Go0}
    Assume  $\Phi, \Psi \in \mathcal{P}$ and $\alpha$ belongs to the resonant set $\Lambda_\Phi$. Then the operator family $S_\en$ defined by \eqref{Sen} converges to  the operator $S(\theta_\alpha, \beta\zeta_\alpha)$
    as $\nu \to 0$ and $\eta\to \infty$  in the norm resolvent sense.
\end{thm}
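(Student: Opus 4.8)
The plan is to establish convergence in the uniform operator topology by producing, for arbitrary $f\in L_2(\Real)$, a good approximation of $y_\en:=(S_\en-z)^{-1}f$ and comparing it with $y_0:=(S(\theta_\alpha,\beta\zeta_\alpha)-z)^{-1}f$, all estimates being linear in $\|f\|$ so that the final bound survives $\sup_{\|f\|\le1}$. Fix $z\in\mathbb{C}\setminus\Real$. The driving mechanism is the a priori bound $\|(S_\en-z)^{-1}\|\le|\Im z|^{-1}$: once I exhibit $Y_\en\in W_2^2(\Real)=\dmn S_\en$ with $(S_\en-z)Y_\en=f+\rho_\en$ and $\|\rho_\en\|=o(1)\|f\|$, it follows that $\|y_\en-Y_\en\|\le|\Im z|^{-1}\|\rho_\en\|=o(1)\|f\|$, and it remains only to check $\|Y_\en-y_0\|=o(1)\|f\|$.

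The central device is the substitution $y=u_\alpha(x/\eps)\,g$, where $u_\alpha$ is the half-bound state normalised by $u_\alpha\equiv1$ on $\Real_-$ outside $\cI$ and $u_\alpha\equiv\theta_\alpha$ on $\Real_+$ outside $\cI$. Because $u_\alpha$ solves \eqref{EqUalpha}, the dangerous $\eps^{-2}$ term cancels identically: a direct computation gives the Sturm--Liouville form $(S_\en-z)\bigl(u_\alpha(x/\eps)g\bigr)=-u_\alpha(x/\eps)^{-1}\frac{d}{dx}\bigl[u_\alpha(x/\eps)^2g'\bigr]+\frac{\beta}{\nu}\Psi(x/\nu)u_\alpha(x/\eps)g-zu_\alpha(x/\eps)g$, so that the equation $(S_\en-z)y=f$ is equivalent to the $\eps$-regular problem $-(p_\eps g')'+\frac{\beta}{\nu}\Psi(x/\nu)p_\eps g-zp_\eps g=u_\alpha(x/\eps)f$ with the bounded weight $p_\eps(x)=u_\alpha(x/\eps)^2$. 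This is exactly the point at which resonance enters: the gauge removes the singular potential only because $\alpha\in\Lambda_\Phi$, and the associated spurious $\eps^{-1}$ flux vanishes thanks to $\alpha\int_\cI\Phi u_\alpha\,dt=\int_\cI u_\alpha''\,dt=u_\alpha'(1)-u_\alpha'(-1)=0$ from \eqref{NeumanProblemWithAlpha}.

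It then suffices to pass to the limit in the regular problem for $g$. As $\eps\to0$ the weight $p_\eps$ and the right-hand side multiplier $u_\alpha(x/\eps)$ tend to the step functions equal to $1$ on $\Real_-$ and to $\theta_\alpha^2$, resp.\ $\theta_\alpha$, on $\Real_+$, while the term $\frac{\beta}{\nu}\Psi(x/\nu)p_\eps g$ concentrates at the origin. Here the hypothesis $\eta=\nu/\eps\to\infty$ is decisive: on the bulk of the support of $\Psi(\cdot/\nu)$ the factor $p_\eps$ has already reached its step values, and the rescaling $x=\nu\tau$ yields $\int\Psi(\tau)u_\alpha(\eta\tau)^2\,d\tau\to\int_{\Real_-}\Psi+\theta_\alpha^2\int_{\Real_+}\Psi$, so the $\delta$-like term acts on $g$ with effective strength $\beta\bigl(\int_{\Real_-}\Psi+\theta_\alpha^2\int_{\Real_+}\Psi\bigr)$. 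The limiting interface conditions for $g$ are therefore continuity of $g$ together with the flux jump $\theta_\alpha^2g'(+0)-g'(-0)=\beta\bigl(\int_{\Real_-}\Psi+\theta_\alpha^2\int_{\Real_+}\Psi\bigr)g(0)$; undoing the substitution via $y=u_\alpha(x/\eps)g$ converts these into $y(+0)=\theta_\alpha y(-0)$ and $y'(+0)=\theta_\alpha^{-1}y'(-0)+\beta\zeta_\alpha y(-0)$, with $\zeta_\alpha$ precisely the quantity \eqref{MapZeta}. (Consistently, the cross-scale integral is the $\lambda\to\infty$ degeneration of $\theta_\alpha\kappa(\alpha,\lambda)$ from \eqref{MapKappa}, the object governing case (ii).) Concretely I would build $Y_\en=u_\alpha(x/\eps)\,G_\en$ with $G_\en$ a fixed smooth interpolant matching the traces of $y_0$, add the corrector of Proposition \ref{PropW22Corrector} to force $Y_\en\in W_2^2(\Real)$, and estimate $\rho_\en$ term by term using Propositions \ref{PropEstYoverF} and \ref{PropCauhyProblEst}.

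The principal obstacle is the rigorous, uniform (operator-norm rather than merely strong) control of the concentrating term $\frac{\beta}{\nu}\Psi(x/\nu)p_\eps g$. One must show that, although $p_\eps$ varies on the fine scale $\eps$ whereas $\Psi(\cdot/\nu)$ lives on the coarse scale $\nu$, their product integrates against $g$ to $\beta\zeta_\alpha$ times the trace $g(0)$ with an error $o(1)\|f\|$ that is uniform over $\|f\|\le1$, and that this decoupling of $u_\alpha$ into its asymptotic values $u_\alpha^\pm$ occurs precisely at the rate governed by $\eta^{-1}=\eps/\nu\to0$. A secondary technical point is that $u_\alpha$ may vanish inside $\cI$ for sign-changing $\Phi$, so the global multiplicative gauge must be localised: the substitution is used only on a fixed neighbourhood of the origin, where the relevant factor is pinned to the nonzero boundary values $u_\alpha^\pm$, and matched to $y_0$ in the outer region through Proposition \ref{PropW22Corrector}. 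Once these estimates are in place, the resolvent bound and the linearity of all constants in $\|f\|$ deliver norm resolvent convergence.
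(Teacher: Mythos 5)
Your overall architecture---exhibit a quasimode $Y_\en\in W_2^2(\Real)=\dmn S_\en$ with $(S_\en-z)Y_\en=f+\rho_\en$ and $\|\rho_\en\|=o(1)\|f\|$ uniformly over $\|f\|\le 1$, then invoke the uniform bound on $(S_\en-z)^{-1}$---is exactly the paper's strategy, and your gauge computation does identify the correct limit operator: the flux condition for $g$ untransforms to $y'(+0)=\theta_\alpha^{-1}y'(-0)+\beta\zeta_\alpha y(-0)$ with $\zeta_\alpha$ as in \eqref{MapZeta}. But the concrete construction you propose does not yield a small residual. If $Y_\en=u_\alpha(x/\eps)G_\en$ with $G_\en$ a \emph{fixed smooth interpolant} matching the traces of $y_0$, then after the $\eps^{-2}$-cancellation the residual still contains the cross term $-2\eps^{-1}u_\alpha'(x/\eps)G_\en'(x)$, supported in $[-\eps,\eps]$; its $L_2$-norm is of order $\eps^{-1/2}|y_0'(-0)|$ and diverges. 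This is precisely why the paper's approximation \eqref{hatYen} carries the derivative data in a corrector $\eps g_\en(x/\eps)$ whose profile solves the ODE \eqref{ProblGen} involving $\alpha\Phi$ (in your gauge language: the admissible $G_\en'$ is a variation-of-parameters flux, proportional to $u_\alpha^{-2}$ on the fine scale, not a smooth function). The same objection applies to the concentration term: the transition of $G_\en'$ across the origin must follow the $\Psi$-driven profile---the paper's $h_\en$ solving \eqref{ProblHen}, with the bounds and asymptotics \eqref{HenPrimeAtPm1} of Lemma~\ref{LemmaHenProperties}---or else the term $\beta\nu^{-1}\Psi(x/\nu)Y_\en$, of $L_2$-size $\nu^{-1/2}$, survives in $\rho_\en$. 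Finally, your quasimode never reproduces $f$ on the fine region: $(S_\en-z)Y_\en$ there contains no $f$, so $\rho_\en$ contains $-f\chi_\nu$, and $\sup_{\|f\|\le 1}\|f\chi_\nu\|=1$ for every $\nu$. Without a corrector like the paper's $v_\en$ solving \eqref{ProblVen}, driven by $f(\eps t)$ and estimated in Lemma~\ref{LemmaVenProperties}, you get at best strong resolvent convergence---exactly the distinction you say you intend to preserve.

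There are two further unresolved points. For sign-changing $\Phi$ the half-bound state, being a higher Neumann eigenfunction of \eqref{NeumanProblemWithAlpha}, may vanish inside $\cI$; then $u_\alpha(x/\eps)$ vanishes at points of $[-\eps,\eps]$, and these points lie inside \emph{every} fixed neighbourhood of the origin. So your proposed localisation of the gauge does not avoid the degeneracy: the weight $p_\eps=u_\alpha^2(x/\eps)$ is not uniformly elliptic, and the step ``pass to the limit in the $\eps$-regular problem'' is unjustified as stated. (For the one-directional quasimode estimate no division by $u_\alpha$ is needed, but then the previous paragraph applies.) Second, the estimate you yourself call ``the principal obstacle''---uniform, operator-norm control of the two-scale coupling between $\Psi(\cdot/\nu)$ and $u_\alpha(\cdot/\eps)$ at the rate $\eta^{-1}$---is asserted, not proved; in the paper it is the combined content of \eqref{GenPrimeAt1}, Lemma~\ref{LemmaHenProperties}, and the jump estimates of Lemma~\ref{LemmaJumpsAreSmall}, which feed Proposition~\ref{PropW22Corrector} to produce the corrector $r_\en$ with $\varrho(\nu,\eta)=\nu^{1/2}+\eta^{-1}$. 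In short, your plan points at the right limit operator and the right proof skeleton, but the analytic core---the three multi-scale correctors $g_\en$, $h_\en$, $v_\en$ and their uniform estimates---is missing, and the specific quasimode you describe would fail to have residual $o(1)\|f\|$.
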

\medskip

We have divided the proof into a sequence of lemmas.

Let us fix a function $f\in L_2(\Real)$ and a number $z\in \mathbb{C}$ with $\Im z\neq 0$. For abbreviation, in this section we let $S$ stand for $S(\theta_\alpha, \beta\zeta_\alpha)$.
Our aim is to approximate both vectors $(S_\en-z)^{-1}f$ and $(S-z)^{-1}f$ in $L_2(\Real)$  by the \textit{same} element $y_\en$ from the domain of $S_\en$.
Of course, such an approximation must be uniform in $f$ in bounded subsets of $L_2(\Real)$. We construct the vector $y_\en$ in the explicit form, which allows us to estimate $L_2(\Real)$-norms of the differences
$(S_\en-z)^{-1}f-y_\en$ and $(S-z)^{-1}f-y_\en$.
This is the aim of the next lemmas.

First we construct a candidate for the approximation as follows. Let us set $y=(S-z)^{-1}f$. Write
$w_\en(x)=y(x)$ for $|x|>\nu$ and
\begin{equation*}
  w_\en(x)=y(-0)\bigl(u_\alpha(x/\eps)+\beta\nu h_\en(x/\nu)\bigr)+\eps g_\en( x/\eps)
  +\eps^2 v_\en(x/\eps)\quad \text{for }|x|\leq\nu.
\end{equation*}
Here $h_\en$, $g_\en$, and $v_\en$  are  solutions to the Cauchy problems
\begin{align}
\label{ProblHen}
     &\hskip12pt h''= \Psi(t)u_\alpha\left(\eta t\right),\quad t\in\Real,\qquad
      h(0)=0,\quad h'(0)=0;
\\ \label{ProblGen}
    &\begin{cases}\displaystyle
     g''-\alpha \Phi(t)g= \alpha \beta \eta y(-0)\Phi(t)h_\en\left(\eta t\right),\quad t\in\Real,\\\displaystyle
      g(-1)=0, \quad g'(-1)=y'(-0)+\beta y(-0)\int_{\Real_-}\kern-4pt \Psi\,ds;
    \end{cases}
 \\ \label{ProblVen}
&\hskip8pt -v''+\alpha\Phi(t)v=f(\eps t)\chi_\eta(t),\quad t\in\Real,\quad v(0)=0,\; v'(0)=0
\end{align}
respectively, and $u_\alpha$ is the half-bound state corresponding to the resonant coupling constant $\alpha$. Here and subsequently, $\chi_a$  is the characteristic function of interval $(-a,a)$.
Hence we can surely expect that $y$  is a very satisfactory approximation to $(S_\en-z)^{-1}f$  for $|x|>\nu$, but
the approximation on the support of $\Psi$ is more subtle.

\begin{lem}\label{LemmaHenProperties}
The function $h_\en$ possesses the following properties:

(i) there exist constants $C_1$ and $C_2$ such that
\begin{equation}\label{HenEst}
  \|h_\en\|_{C^1(\cI)}\leq C_1,\qquad
  |h_\en(t)|\leq C_2\, t^2
\end{equation}
for $t\in\Real$ and all  $\eps, \nu \in (0,1)$;

(ii) the asymptotic relations
\begin{equation}\label{HenPrimeAtPm1}
    h_\en'(-1)=- \int_{\Real_-}\kern-4pt \Psi\,ds+O(\eta^{-1}),\qquad
    h_\en'(1)= \theta_\alpha\int_{\Real_+}\kern-4pt \Psi\,ds+O(\eta^{-1})
\end{equation}
hold as $\nu\to0$ and $\eta\to \infty$.
\end{lem}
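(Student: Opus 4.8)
The plan is to solve the Cauchy problem \eqref{ProblHen} explicitly by integrating twice. Since $h_\en(0)=h_\en'(0)=0$, two integrations give the representations
\begin{equation*}
    h_\en'(t)=\int_0^t\Psi(s)\,u_\alpha(\eta s)\,ds,\qquad
    h_\en(t)=\int_0^t(t-s)\,\Psi(s)\,u_\alpha(\eta s)\,ds.
\end{equation*}
The whole argument then rests on these formulas together with two elementary facts: the integrand is bounded by $M:=\|\Psi\|_{L_\infty}\|u_\alpha\|_{L_\infty}$ \emph{uniformly in $\eta$}, and, by the chosen normalization of the half-bound state, $u_\alpha(\eta s)=\theta_\alpha$ whenever $\eta s\geq 1$ and $u_\alpha(\eta s)=1$ whenever $\eta s\leq -1$.

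For part (i) I would simply estimate the integrals by $M$. As $\supp\Psi\subseteq\cI$, for $t\in\cI$ one has $|h_\en'(t)|\leq M|t|\leq M$ and $|h_\en(t)|\leq M\int_0^{|t|}|t-s|\,ds=\tfrac12 M t^2\leq\tfrac12 M$, which yields the bound on $\|h_\en\|_{C^1(\cI)}$ with a constant independent of $\eps,\nu$. The quadratic estimate $|h_\en(t)|\leq C_2 t^2$ is immediate for $|t|\leq 1$ from the same computation; for $|t|>1$ the support condition truncates the integral to $\cI$, so that $|h_\en(t)|\leq M\bigl(|t|-\tfrac12\bigr)\leq M|t|\leq M t^2$. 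Combining the two ranges gives the claim with $C_2=M$.

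For part (ii) I would exploit that the only variation of $s\mapsto u_\alpha(\eta s)$ inside the range of integration occurs on the transition set $|s|\leq\eta^{-1}$, of measure $2\eta^{-1}$. Writing
\begin{equation*}
    h_\en'(1)=\int_0^{\eta^{-1}}\Psi(s)\,u_\alpha(\eta s)\,ds
    +\theta_\alpha\int_{\eta^{-1}}^{1}\Psi(s)\,ds,
\end{equation*}
the first term is $O(\eta^{-1})$ by the $L_\infty$-bound, while the second equals $\theta_\alpha\int_{\Real_+}\Psi\,ds+O(\eta^{-1})$, since $\int_0^{\eta^{-1}}\Psi\,ds=O(\eta^{-1})$ and $\supp\Psi\subseteq\cI$. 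This is the second relation in \eqref{HenPrimeAtPm1}, and the first one follows identically on $(-1,0)$, using $u_\alpha(\eta s)=1$ there for $s\leq-\eta^{-1}$.

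The only point requiring care — and the main obstacle, such as it is — is the uniformity of all constants in $\eps$ and $\nu$. This is guaranteed precisely because the parameters enter solely through the argument $\eta s$ of the bounded half-bound state, so the sup-norm of the integrand never changes with $\eta$; the entire $\eta$-dependence is confined to the shrinking transition region $|s|\leq\eta^{-1}$, which is exactly what produces the $O(\eta^{-1})$ remainders in (ii) and is harmless for the uniform bounds of (i).
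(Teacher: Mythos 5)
Your proposal is correct and follows essentially the same route as the paper: both derive the explicit representations $h_\en'(t)=\int_0^t\Psi(s)u_\alpha(\eta s)\,ds$, $h_\en(t)=\int_0^t(t-s)\Psi(s)u_\alpha(\eta s)\,ds$ by direct integration, obtain (i) from the $L_\infty$-bounds on $\Psi$ and $u_\alpha$, and obtain (ii) from the fact that $u_\alpha(\eta s)$ equals the constants $1$ and $\theta_\alpha$ outside the transition region $|s|\leq\eta^{-1}$ of measure $O(\eta^{-1})$. The only cosmetic difference is that the paper phrases this last step as $L_{1,\mathrm{loc}}$-convergence of $u_\alpha(\eta\,\cdot\,)$ to a step function, whereas you split the integral at $\pm\eta^{-1}$ explicitly; the computations are identical.
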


\begin{proof}
The solution $h_\en$ and its derivative can be represented as
\begin{equation}\label{Hen}
    h_\en(t)= \int_0^t(t-s)\Psi(s)u_\alpha (\eta s)\,ds, \qquad
    h'_\en(t)= \int_0^t\Psi(s)u_\alpha (\eta s)\,ds.
\end{equation}
The first estimate in \eqref{HenEst} follows immediately from these relations, because
$\Psi$ and $u_\alpha$ belong to $L_\infty(\Real)$.  By the same reason,
 \begin{equation*}
 |h_\en(t)|\leq c_1\left|\int_0^t|t-s|\,ds\right|\leq C_2 t^2.
 \end{equation*}
Now according to our choice of the half-bound state, we see that
 \begin{equation*}
    u_\alpha (\eta t)\to
    u_\alpha^*(t)= \begin{cases}
        1 & \text{if } t<0,\\
        \theta_\alpha & \text{if } t>0
    \end{cases}
\end{equation*}
in $L_{1, loc}(\Real)$, as $\eta\to \infty$. In addition,  the difference $u_\alpha(\eta t)-u_\alpha^*(t)$ is zero outside the interval $[-\eta^{-1},\eta^{-1}]$ and bounded on this interval. In view of the second relation in \eqref{Hen},  this establishes the asymptotic formulas \eqref{HenPrimeAtPm1}.
\end{proof}

\begin{lem}\label{LemmaGenProperties}
There exist constants $C_1$ and $C_2$, independent of $f$, such that
\begin{align}\label{GenEstT}
    &|g_\en(t)|\leq C_1(1+|t|)\|f\|, &&t\in\Real,\\\label{GenPrimeEstR}
    &|g'_\en(t)|\leq C_2\|f\|,  &&t\in\Real
\end{align}
for all  $\eps$ and $\nu$ whenever the ratio of $\eps$ to $\nu$ remains bounded as $\eps, \nu\to 0$.
In addition,  the value $g'_\en(1)$  admits the asymptotics
  \begin{equation}\label{GenPrimeAt1}
     g'_\en(1)=\theta_\alpha^{-1}\left(y'(-0)+\beta y(-0)\int_{\Real_-}\kern-4pt \Psi\,ds\right)+O(\eta^{-1})\|f\|
  \end{equation}
as  $\nu\to 0$,  $\eta\to \infty$.
\end{lem}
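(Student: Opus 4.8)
The plan is to read \eqref{ProblGen} as a Cauchy problem on $\cI$ with data controlled by $\|f\|$ and with a \emph{small} forcing term, and then to extend the resulting bounds to all of $\Real$ using the triviality of the equation off $\supp\Phi$. Denote by $F_\en$ the right-hand side of \eqref{ProblGen} and set $b=y'(-0)+\beta y(-0)\int_{\Real_-}\Psi\,ds$, so that $g_\en(-1)=0$ and $g_\en'(-1)=b$. Since $y=(S-z)^{-1}f$, Proposition \ref{PropEstYoverF} gives $|y(-0)|\le C\|f\|$ and $|y'(-0)|\le C\|f\|$, whence $|b|\le C\|f\|$. The decisive observation is that $F_\en$ is uniformly small: $\Phi$ is supported in $\cI$, so on $\supp\Phi$ the function $h_\en$ is evaluated only at arguments of order $\eta^{-1}$, and the quadratic bound $|h_\en(s)|\le C_2 s^2$ from Lemma \ref{LemmaHenProperties}(i) then yields $|h_\en|\le C\eta^{-2}$ there; multiplying by the prefactor $\eta$ and by $|y(-0)|\le C\|f\|$ gives $\|F_\en\|_{L_\infty(\cI)}\le C\eta^{-1}\|f\|$. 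This is exactly where the hypothesis that $\eps/\nu=\eta^{-1}$ stays bounded enters.

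With these two inputs, \eqref{GenEstT} and \eqref{GenPrimeEstR} follow quickly. On $\cI$, Proposition \ref{PropCauhyProblEst} applied to \eqref{ProblGen} gives $\|g_\en\|_{C^1(\cI)}\le C(|g_\en(-1)|+|g_\en'(-1)|+\|F_\en\|_{L_\infty(\cI)})\le C\|f\|$. Off $\cI$ both $\alpha\Phi$ and $F_\en$ vanish, so $g_\en''=0$ on each of the rays $(-\infty,-1]$ and $[1,\infty)$; there $g_\en$ is affine and $g_\en'$ is constant. Combining this with the $C^1(\cI)$ bound gives $|g_\en'(t)|\le C_2\|f\|$ and $|g_\en(t)|\le C_1(1+|t|)\|f\|$ for every $t\in\Real$.

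For the asymptotics \eqref{GenPrimeAt1} I would compare $g_\en$ with the solution $\bar g$ of the homogeneous equation $\bar g''-\alpha\Phi\bar g=0$ carrying the same Cauchy data at $t=-1$, i.e. $\bar g=b\,\tilde u_\alpha$, where $\tilde u_\alpha$ is the homogeneous solution normalized by $\tilde u_\alpha(-1)=0$, $\tilde u_\alpha'(-1)=1$. The value $\tilde u_\alpha'(1)$ is computed from the Wronskian of $\tilde u_\alpha$ with the half-bound state $u_\alpha$, which solves the same homogeneous equation and satisfies $u_\alpha(-1)=1$, $u_\alpha'(-1)=0$, $u_\alpha(1)=\theta_\alpha$, $u_\alpha'(1)=0$. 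Since the equation has no first-order term, this Wronskian is constant; evaluating it at $t=-1$ gives $1$ and at $t=1$ gives $\theta_\alpha\,\tilde u_\alpha'(1)$, so $\tilde u_\alpha'(1)=\theta_\alpha^{-1}$ and $\bar g'(1)=\theta_\alpha^{-1}b$, precisely the leading term of \eqref{GenPrimeAt1}. The difference $g_\en-\bar g$ solves the same equation with forcing $F_\en$ and zero Cauchy data at $-1$, so Proposition \ref{PropCauhyProblEst} bounds it by $C\|F_\en\|_{L_\infty(\cI)}\le C\eta^{-1}\|f\|$ in $C^1(\cI)$; taking the derivative at $t=1$ yields $g_\en'(1)=\theta_\alpha^{-1}b+O(\eta^{-1})\|f\|$, as claimed.

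The only genuinely delicate step is the smallness of the forcing $F_\en$: one must exploit that $\Phi$ is compactly supported, so that $h_\en$ never sees its region of linear growth, turning the seemingly dangerous factor $\eta$ into the favourable bound $O(\eta^{-1})$. The Wronskian identity producing the factor $\theta_\alpha^{-1}$ is short but conceptually important, as it is the source of the $(2,2)$-entry $\theta_\alpha^{-1}$ of the transfer matrix in \eqref{PointInteractionConds}. The remaining arguments are routine applications of Propositions \ref{PropEstYoverF} and \ref{PropCauhyProblEst}.
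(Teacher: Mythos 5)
Your proof is correct, and its first half coincides with the paper's argument: the paper likewise bounds the forcing term via the quadratic estimate $|h_\en(s)|\le C s^2$ from Lemma \ref{LemmaHenProperties} (giving $\|h_\en(\eta^{-1}\,\cdot\,)\|_{C(\cI)}\le c\,\eta^{-2}$, hence an $O(\eta^{-1})\|f\|$ right-hand side), applies Proposition \ref{PropCauhyProblEst} to get $\|g_\en\|_{C^1(\cI)}\le c\|f\|$, and then uses the affine behaviour of $g_\en$ outside $\cI$ to obtain \eqref{GenEstT}--\eqref{GenPrimeEstR}. (You also correctly read the forcing in \eqref{ProblGen} as $h_\en(\eta^{-1}t)$ rather than the literal $h_\en(\eta t)$ printed there; this is the interpretation forced by the proof of Theorem \ref{ThmCaseEpsNu-1Go0}, and the lemma would be false under the literal reading.) Where you diverge is the derivation of \eqref{GenPrimeAt1}: the paper multiplies equation \eqref{ProblGen} by the half-bound state $u_\alpha$ and integrates by parts over $\cI$, which, thanks to $u_\alpha'(\pm1)=0$, yields the identity $\theta_\alpha g'_\en(1)-g'_\en(-1)=\alpha\beta\eta\,y(-0)\int_{-1}^{1}\Phi(s)h_\en(\eta^{-1}s)u_\alpha(s)\,ds=O(\eta^{-1})\|f\|$ in one stroke. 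You instead split $g_\en=b\,\tilde u_\alpha+d_\en$, compute $\tilde u_\alpha'(1)=\theta_\alpha^{-1}$ from the constancy of the Wronskian of $\tilde u_\alpha$ and $u_\alpha$, and control $d_\en$ in $C^1(\cI)$ by Proposition \ref{PropCauhyProblEst}. The two arguments rest on the same structural facts (the Lagrange identity for the operator $\tfrac{d^2}{dt^2}-\alpha\Phi$ and the Neumann data of $u_\alpha$); the paper's version is shorter because it never needs the auxiliary solution $\tilde u_\alpha$, while yours is slightly stronger in that it bounds the full difference $g_\en-b\,\tilde u_\alpha$ in $C^1(\cI)$ by $O(\eta^{-1})\|f\|$, not merely the derivative at $t=1$. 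Both are complete and yield the same error term.
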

\begin{proof}
 From Proposition~\ref{PropCauhyProblEst} it follows that
\begin{equation*}
    \|g_\en\|_{C^1(\cI)}\leq c_1(|y(-0)|+|y'(-0)|)+
    c_2 \eta |y(-0)|\,\|h_\en(\eta^{-1}\,\cdot\,)\|_{C(\cI)}.
\end{equation*}
Next, in light of \eqref{HenEst}, we have
\begin{equation}\label{HenEstEn}
    \|h_\en(\eta^{-1}\,\cdot\,)\|_{C(\cI)}=
    \max_{\phantom{1}|t|\leq\eta^{-1}}|h_\en(t)|\leq c_3\eta^{-2}.
\end{equation}
Combining this estimate with \eqref{EstY(pm0)}, we deduce
\begin{equation}\label{GenEstC1(-1,1)}
\|g_\en\|_{C^1(\cI)}\leq c_4(|y(-0)|+|y'(-0)|)\leq c_5\|f\|.
\end{equation}
Since the support of $\Phi$ lies in $\cI$, the function $g_\en$ is linear outside $\cI$, namely
$g_\en(t)=g_\en'(-1)(t+1)$ for $t\leq-1$ and $g_\en(t)=g_\en(1)+g_\en'(1)(t-1)$ for $t\geq1$.
Therefore estimates \eqref{GenEstT}, \eqref{GenPrimeEstR} follow easily from these relations and \eqref{GenEstC1(-1,1)}.

Next,  multiplying equation \eqref{ProblGen} by $u_\alpha$ and integrating on $\cI$ by parts  yield
\begin{equation*}
   \theta_\alpha g'_\en(1)- g'_\en(-1)=
 \alpha \beta\eta\, y(-0)\int_{-1}^1\Phi(s)\,h_\en\left(\eta^{-1} s\right)u_\alpha(s)\,ds.
\end{equation*}
The right-hand side can be estimated by $c_6\eta^{-1}\|f\|$ provided $|\eta|\geq1$, in view of \eqref{HenEstEn} and Proposition~\ref{PropEstYoverF}.  Recalling the initial conditions \eqref{ProblGen}, we obtain \eqref{GenPrimeAt1}.
\end{proof}

\begin{lem}\label{LemmaVenProperties}
There exist constants $C_1$ and $C_2$, independent of $f$, such that
\begin{equation}\label{VenEst}
   |v_\en(t)|\leq C_1\eps^{-2}\nu^{3/2}\|f\|,\qquad |v'_\en(t)|\leq C_2\eps^{-1}\nu^{1/2}\|f\|
\end{equation}
for $t\in [-\eta,\eta]$, as $\nu\to0$ and $\eta\to \infty$.
\end{lem}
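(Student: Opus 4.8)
The plan is to start from the variation-of-parameters representation of $v_\en$, exactly as in the proof of Proposition~\ref{PropCauhyProblEst}. Since $v_\en$ solves \eqref{ProblVen} with zero Cauchy data at the origin, one has $v_\en(t)=-\int_0^t k(t,s)\,f(\eps s)\chi_\eta(s)\,ds$ with $k(t,s)=v_1(s)v_2(t)-v_1(t)v_2(s)$, where $v_1,v_2$ are the fundamental solutions of $-v''+\alpha\Phi v=0$ normalized at $t=0$. The crucial point is that Proposition~\ref{PropCauhyProblEst} cannot be invoked directly, because the right-hand side $f(\eps\,\cdot\,)$ only belongs to $L_2$ and not to $L_\infty$; instead I would convert the $L_2$-norm of $f$ into an $L_1$-bound on the rescaled interval by Cauchy--Schwarz. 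Substituting $x=\eps s$ and using the identity $\eps\eta=\nu$ gives $\int_{-\eta}^{\eta}|f(\eps s)|\,ds=\eps^{-1}\int_{-\nu}^{\nu}|f(x)|\,dx\le \sqrt2\,\eps^{-1}\nu^{1/2}\|f\|$, which is the single analytic estimate that drives the whole argument.

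Next I would split the interval $[-\eta,\eta]$ into the core $|t|\le1$, which contains the support of $\Phi$, and the two tails $1\le|t|\le\eta$, where the equation reduces to $-v_\en''=f(\eps t)$ (since $\Phi=0$ and $\chi_\eta(t)=1$ there, assuming as we may that $\eta\ge1$). On the core the fundamental solutions $v_1,v_2$ and the kernel $k$ are bounded by constants independent of $\eps$ and $\nu$, being fixed functions determined by $\alpha\Phi$; so the representation together with the Cauchy--Schwarz bound on $[-\eps,\eps]$ yields $|v_\en(t)|+|v_\en'(t)|\le C\eps^{-1/2}\|f\|$ for $|t|\le1$, which in particular controls the data $v_\en(\pm1)$, $v_\en'(\pm1)$. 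On the tail $t\in[1,\eta]$ I would integrate the free equation twice from $t=1$, writing $v_\en'(t)=v_\en'(1)-\int_1^t f(\eps s)\,ds$ and $v_\en(t)=v_\en(1)+v_\en'(1)(t-1)-\int_1^t\!\!\int_1^\tau f(\eps s)\,ds\,d\tau$, and bound the inner integrals again by $\sqrt2\,\eps^{-1}\nu^{1/2}\|f\|$; the tail $[-\eta,-1]$ is handled symmetrically.

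Collecting the terms, the derivative obeys $|v_\en'(t)|\le C\eps^{-1/2}\|f\|+C\eps^{-1}\nu^{1/2}\|f\|$, while the double integration over a tail of length $\eta=\nu/\eps$ contributes the governing term $\eta\cdot\eps^{-1}\nu^{1/2}\|f\|=\eps^{-2}\nu^{3/2}\|f\|$ to $v_\en$. The final simplification uses that $\eta\to\infty$, hence $\nu\ge\eps$ for small parameters, which lets me absorb all the lower-order contributions (the $\eps^{-1/2}$ from the core and the $\nu\eps^{-3/2}$ from $v_\en'(1)(t-1)$) into $\eps^{-1}\nu^{1/2}$ and $\eps^{-2}\nu^{3/2}$ respectively, giving \eqref{VenEst}. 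I expect the main obstacle to be precisely this bookkeeping of powers of $\eps$ and $\nu$: the homogeneous solution $t\mapsto t$ grows linearly across the long tail interval, and taking a double primitive accumulates an extra factor $\eta$, so the interplay between the Cauchy--Schwarz factor $\eps^{-1}\nu^{1/2}$ and the length $\eta$ of the support of $\chi_\eta$ must be tracked carefully to land exactly on the exponents $\eps^{-2}\nu^{3/2}$ and $\eps^{-1}\nu^{1/2}$ rather than on something weaker.
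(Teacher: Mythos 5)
Your proposal is correct and follows essentially the same route as the paper: the variation-of-parameters representation with kernel $k(t,s)=v_1(s)v_2(t)-v_1(t)v_2(s)$, the Cauchy--Schwarz rescaling $\int_{-\eta}^{\eta}|f(\eps s)|\,ds=\eps^{-1}\int_{-\nu}^{\nu}|f|\,d\tau\le \sqrt2\,\eps^{-1}\nu^{1/2}\|f\|$, and the fact that the fundamental solutions are linear outside $\cI$ (since $\supp\Phi\subset\cI$), which is exactly what produces the extra factor $\eta$ in the bound for $v_\en$ but not for $v_\en'$. The only difference is organizational: the paper packages the core/tail case analysis you perform into global kernel estimates, $|k(t,s)|\le c_1(|t|+|s|)+c_2$ and $|\partial k/\partial t(t,s)|\le c_3$ on all of $\Real^2$, and then estimates a single integral over $[-\eta,\eta]$, whereas you split the interval and integrate the free equation on the tails, with the harmless extra absorption step $\eps\le\nu$ (valid since $\eta\to\infty$).
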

\begin{proof}
  The proof consists in the careful analysis of representation \eqref{CPSolRepresentation} for the case of problem  \eqref{ProblVen}. In fact,
  \begin{equation*}
    v_\en(t)=\int_{0}^tk(t,s)f(\eps s)\chi_\eta(s)\,ds,
  \end{equation*}
where $k(t,s)=v_1(s)v_2(t)-v_1(t)v_2(s)$, and $v_1$, $v_2$ are solutions of $-v''+\alpha\Phi v=0$ subject to the initial conditions $v_1(0)=1$, $v'_1(0)=0$ and $v_2(0)=0$, $v'_2(0)=1$ respectively.

The kernel $k$ admits the following estimates
\begin{equation}\label{Kest}
  |k(t,s)|\leq c_1 (|t|+|s|)+c_2, \quad \left|\frac{\partial k}{\partial t}(t,s)\right|\leq c_3,
  \quad (t,s)\in\Real^2
\end{equation}
with some positive constants $c_j$.
Indeed, both solutions $v_1$ and $v_2$ are linear functions outside the interval $\cI$, since $\supp \Phi\subset \cI$. Set $v_j(t)=a_j^\pm t+b_j^\pm$ for $\pm t>1$.
Suppose that $t>1$ and $s>1$;
then
\begin{equation*}
  k(t,s)=(b_1^+a_2^+-b_2^+a_1^+)(t-s),  \quad \frac{\partial k}{\partial t}(t,s)=b_1^+a_2^+-b_2^+a_1^+,
\end{equation*}
which implies \eqref{Kest}  for such $t$ and $s$. Next, if $t>1$ and $|s|<1$, then
\begin{equation*}
  k(t,s)=v_1(s)(a_2^+t+b_2^+)-v_2(s)(a_1^+t+b_1^+),  \quad \frac{\partial k}{\partial t}(t,s)=a_2^+v_1(s)-a_1^+v_2(s).
\end{equation*}
That \eqref{Kest} for such $t$ and $s$ follows from the estimates $\|v_j\|_{C(-1,1)}\leq c_4$, $j=1,2$.
The other cases (such as $|t|<1$ and $s>1$; $t<-1$ and $s<-1$, and  so on) can be treated in a similar way.

Therefore, for $\eta$ large enough, we have
\begin{align*}
&\begin{aligned}
    \max_{t\in [-\eta,\eta]}|v_\en(t)|\leq
    \int_{-\eta}^\eta \max_{t\in [-\eta,\eta]}|k(t,s)||f(\eps s)|\,ds
   \leq
    \int_{-\eta}^\eta (c_5 (\eta+|s|)+c_6)|f(\eps s)|\,ds
    \\
    \leq
     c_7 \eta\int_{-\eta}^\eta |f(\eps s)|\,ds = c_7 \eta\eps^{-1}\int_{-\nu}^\nu |f(\tau)|\,d\tau
    \leq c_8 \eta\eps^{-1}\nu^{1/2}\|f\|=c_8 \eta\eps^{-2}\nu^{3/2}\|f\|,
\end{aligned}\\
&\begin{aligned}
  \max_{t\in [-\eta,\eta]}|v'_\en(t)|&\leq
    \int_{-\eta}^\eta \max_{t\in [-\eta,\eta]}\left|\frac{\partial k}{\partial t}(t,s)\right| |f(\eps s)|\,ds\\
    &\leq
    c_9\int_{-\eta}^\eta|f(\eps s)|\,ds
   \leq c_{10} \eps^{-1}\int_{-\nu}^\nu |f(\tau)|\,d\tau
    \leq c_{11}\eps^{-1}\nu^{1/2}\|f\|,
\end{aligned}
\end{align*}
which proves the lemma.
\end{proof}

\begin{cor}\label{CorWenSmall}
The function $w_\en$ is bounded in $[-\nu,\nu]$ uniformly in $\eps$ and $\nu$ provided the ratio $\eps/\nu$ remains bounded as $\eps, \nu \to 0$, and there exists a constant $C$ such that
 $\max_{|x|\leq \nu} |w_\en(x)|\leq C\|f\|$.
\end{cor}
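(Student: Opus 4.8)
The plan is to estimate separately the four constituent terms of $w_\en$ on the interval $[-\nu,\nu]$, using only the bounds already established in Lemmas~\ref{LemmaHenProperties}--\ref{LemmaVenProperties} and Proposition~\ref{PropEstYoverF}. Recall that for $|x|\leq\nu$ we have
$$
w_\en(x)=y(-0)\bigl(u_\alpha(x/\eps)+\beta\nu h_\en(x/\nu)\bigr)+\eps g_\en(x/\eps)+\eps^2 v_\en(x/\eps),
$$
and observe that the constraint $|x|\leq\nu$ translates into $|x/\eps|\leq\eta$ for the arguments of $g_\en$ and $v_\en$, and into $|x/\nu|\leq1$ for the argument of $h_\en$. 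The whole proof is then a matter of checking that, after multiplication by the respective prefactors $\nu$, $\eps$, and $\eps^2$, each term stays bounded by $C\|f\|$.

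For the first two terms I would argue as follows. By Proposition~\ref{PropEstYoverF} one has $|y(-0)|\leq C_1\|f\|$, and since $u_\alpha\in L_\infty(\Real)$ the term $y(-0)u_\alpha(x/\eps)$ is bounded by $C\|f\|$. For the $h_\en$ term the point $x/\nu$ lies in $\cI$, so the first estimate in \eqref{HenEst} gives $|h_\en(x/\nu)|\leq\|h_\en\|_{C(\cI)}\leq C$, whence $|y(-0)\,\beta\nu\,h_\en(x/\nu)|\leq C\nu\|f\|\leq C\|f\|$, using $\nu\in(0,1)$. The two remaining terms are the ones where the scaling is genuinely exploited. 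Applying \eqref{GenEstT} at $t=x/\eps$ with $|x|\leq\nu$ gives
$$
\eps\,|g_\en(x/\eps)|\leq C\eps\bigl(1+|x|/\eps\bigr)\|f\|=C(\eps+|x|)\|f\|\leq C(\eps+\nu)\|f\|,
$$
which is bounded as $\eps,\nu\to0$; here the hypothesis that $\eps/\nu$ remain bounded is exactly what is needed to invoke Lemma~\ref{LemmaGenProperties}. Finally, since $x/\eps\in[-\eta,\eta]$, the first inequality in \eqref{VenEst} yields
$$
\eps^2\,|v_\en(x/\eps)|\leq \eps^2\cdot C\eps^{-2}\nu^{3/2}\|f\|=C\nu^{3/2}\|f\|,
$$
again bounded uniformly. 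Summing the four contributions establishes both assertions of the corollary.

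I do not expect a real obstacle here: the statement is a direct bookkeeping consequence of the preceding lemmas. The only point requiring care is the exact cancellation of the powers of $\eps$ and $\nu$. The prefactors $\eps$ and $\eps^2$ are calibrated precisely to absorb the linear growth of $g_\en$ (via the identity $\eps\eta=\nu$) and the $\eps^{-2}\nu^{3/2}$ blow-up of $v_\en$, respectively. The mild subtlety is to confirm that the rescaled arguments $x/\eps$ and $x/\nu$ stay inside the ranges $[-\eta,\eta]$ and $\cI$ on which the lemmas furnish their bounds, and that the regime under consideration ($\eta\to\infty$, so $\eps/\nu=\eta^{-1}\to0$) indeed satisfies the boundedness hypothesis on $\eps/\nu$ demanded by Lemma~\ref{LemmaGenProperties}. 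Once these checks are in place, the constant $C$ is manifestly independent of $\eps$ and $\nu$.
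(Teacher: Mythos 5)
Your proof is correct and follows essentially the same route as the paper's: the paper likewise treats the corollary as direct bookkeeping from Lemmas~\ref{LemmaHenProperties}--\ref{LemmaVenProperties}, with its displayed estimate \eqref{EstEpsG} being exactly your bounds $\eps|g_\en(x/\eps)|\leq c(\eps+\nu)\|f\|$ and $\eps^2|v_\en(x/\eps)|\leq c\nu^{3/2}\|f\|$ under the assumption $\eps\leq c\nu$. Your explicit checks of the rescaled arguments' ranges and of the $|y(-0)|$ and $h_\en$ terms are the same (tacit) steps the paper leaves to the reader.
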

\begin{proof}
    The corollary is a direct consequence of Lemmas~\ref{LemmaHenProperties}--\ref{LemmaVenProperties}. We only note that
    \begin{multline}\label{EstEpsG}
    \max_{|x|\leq \nu}|\eps g_\en( x/\eps)+\eps^2 v_\en(x/\eps)|\leq \bigl(c_1\eps(1+\nu/\eps)+c_2\nu^{3/2}\bigr)\|f\|\\
    \leq c_3(\eps+\nu)\|f\|\leq c_4\nu\|f\|,
    \end{multline}
in view of \eqref{GenEstT}, \eqref{VenEst}, and the assumption that $\eps\leq c\nu$.
\end{proof}

By construction,  $w_\en$  belongs to $W_2^2(\Real\setminus\{-\nu,\nu\})$.
In general, due to the discontinuity  at the points $x=\pm\nu$,  $w_\en$  is not an element of $\dmn S_\en$.
However,  the jumps of $w_\en$ and the jumps of its first derivative at these points are small enough, as shown below. By Proposition~\ref{PropW22Corrector}, there exists the corrector function $r_\en$ of the form \eqref{CorrectoR}
such that $w_\en+r_\en$  belongs to $W_2^2(\Real) = \dmn S_\en$. Set $y_\en=w_\en+r_\en$.

\begin{lem}\label{LemmaJumpsAreSmall}
The corrector $r_\en$ is small as $\nu\to 0$, $\eta\to \infty$,   and satisfies the inequality
\begin{equation*}
    \max_{x\in \Real\setminus\{-\nu,\nu\}}\bigl|r^{(k)}_\en(x)\bigr|\leq C\varrho(\nu,\eta)\|f\|
\end{equation*}
for $k=0,1,2$, where $\varrho(\nu,\eta)=\nu^{1/2}+\eta^{-1}$.
\end{lem}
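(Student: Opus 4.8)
The plan is to reduce everything to Proposition~\ref{PropW22Corrector}. By construction $w_\en\in W_2^2(\Real\setminus\{-\nu,\nu\})$, and $r_\en$ is precisely the corrector \eqref{CorrectoR} built from $w_\en$ with $a=\nu$. Hence estimate \eqref{REst} reduces the entire statement to controlling the four jumps $[w_\en]_{\pm\nu}$ and $[w_\en']_{\pm\nu}$ by $C\varrho(\nu,\eta)\|f\|$. So the whole proof is a matching computation: comparing the outer representation $w_\en=y$ with the inner representation of $w_\en$ at the two gluing points $x=\pm\nu$, where $y=(S-z)^{-1}f$ and $S=S(\theta_\alpha,\beta\zeta_\alpha)$.

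First I would record the one-sided limits. On the outer side they are $y(\pm\nu)$ and $y'(\pm\nu)$, which by \eqref{EstY(t)-Y(0)} differ from $y(\pm0)$ and $y'(\pm0)$ by $O(\nu)\|f\|$ and $O(\nu^{1/2})\|f\|$ respectively. On the inner side I substitute $x/\eps=\pm\eta$ and $x/\nu=\pm1$ into $w_\en$ and into $w_\en'=y(-0)\bigl(\eps^{-1}u_\alpha'(x/\eps)+\beta h_\en'(x/\nu)\bigr)+g_\en'(x/\eps)+\eps v_\en'(x/\eps)$. The decisive structural fact is that $u_\alpha$ is constant outside $\cI$, so $u_\alpha(-\eta)=1$, $u_\alpha(\eta)=\theta_\alpha$, and, crucially, $u_\alpha'(\pm\eta)=0$; this last identity annihilates the a priori singular term $\eps^{-1}y(-0)u_\alpha'(x/\eps)$ at $x=\pm\nu$. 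Likewise $g_\en$ is affine outside $\cI$, whence $g_\en'(\pm\eta)=g_\en'(\pm1)$.

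The value jumps are then immediate: the inner leading term $y(-0)u_\alpha(\pm\eta)$ equals $y(\pm0)$ by the normalization of $u_\alpha$ together with the interface relation $y(+0)=\theta_\alpha y(-0)$, so it cancels the outer limit $y(\pm\nu)$ up to $O(\nu)\|f\|$; the leftover inner terms $\beta\nu y(-0)h_\en(\pm1)$, $\eps g_\en(\pm\eta)$, $\eps^2 v_\en(\pm\eta)$ are bounded by \eqref{HenEst}, \eqref{GenEstT}, \eqref{VenEst} by $C(\nu+\nu^{3/2})\|f\|\le C\nu\|f\|$, using $\eps\eta=\nu$ and $\eps\le c\nu$. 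The derivative jumps are the heart of the matter and the step I expect to be the main obstacle, because here I must exhibit exact cancellations of the $O(1)$ contributions, leaving only $O(\varrho)$. At $x=\nu$ the inner derivative is $\beta y(-0)h_\en'(1)+g_\en'(1)+\eps v_\en'(\eta)$; inserting the asymptotics \eqref{HenPrimeAtPm1}, \eqref{GenPrimeAt1} turns it into $\theta_\alpha^{-1}y'(-0)+\beta y(-0)\bigl(\theta_\alpha\int_{\Real_+}\Psi+\theta_\alpha^{-1}\int_{\Real_-}\Psi\bigr)+O(\varrho)\|f\|$, i.e. $\theta_\alpha^{-1}y'(-0)+\beta\zeta_\alpha y(-0)+O(\varrho)\|f\|$ with $\zeta_\alpha$ appearing exactly as in \eqref{MapZeta}; by the defining interface condition of $S(\theta_\alpha,\beta\zeta_\alpha)$ this is $y'(+0)$, matching the outer limit $y'(\nu)$ to within $O(\nu^{1/2})\|f\|$. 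At $x=-\nu$ the inner derivative reduces to $\beta y(-0)h_\en'(-1)+g_\en'(-1)+\eps v_\en'(-\eta)$, and the two occurrences of $\beta y(-0)\int_{\Real_-}\Psi$ — one from $h_\en'(-1)$ via \eqref{HenPrimeAtPm1}, the other from the initial datum $g_\en'(-1)=y'(-0)+\beta y(-0)\int_{\Real_-}\Psi$ in \eqref{ProblGen} — cancel, leaving $y'(-0)+O(\varrho)\|f\|$, again matching the outer limit.

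The only real difficulty is the careful bookkeeping of these cancellations and of where each power of $\varrho=\nu^{1/2}+\eta^{-1}$ is generated: the $\eta^{-1}$ part comes from the remainders in \eqref{HenPrimeAtPm1} and \eqref{GenPrimeAt1} (together with $|y(-0)|\le C\|f\|$ from \eqref{EstY(pm0)}), while the $\nu^{1/2}$ part comes from the $\eps v_\en'$ terms estimated by \eqref{VenEst} and from the H\"older-type bound \eqref{EstY(t)-Y(0)} for $y'$. Once all four jumps are shown to be $O(\varrho(\nu,\eta))\|f\|$, the asserted inequality for $r_\en^{(k)}$, $k=0,1,2$, follows at once from \eqref{REst}, whose constant is independent of $w_\en$ and $\nu$.
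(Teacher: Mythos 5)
Your proposal is correct and follows essentially the same route as the paper: reduce to Proposition~\ref{PropW22Corrector}, exploit $u_\alpha'(\pm\eta)=0$, $u_\alpha(-\eta)=1$, $u_\alpha(\eta)=\theta_\alpha$ and the affineness of $g_\en$ outside $\cI$, then match the inner and outer traces at $x=\pm\nu$ using \eqref{EstY(t)-Y(0)}, \eqref{HenEst}, \eqref{HenPrimeAtPm1}, \eqref{GenPrimeAt1}, \eqref{VenEst} and the interface conditions of $S(\theta_\alpha,\beta\zeta_\alpha)$. The cancellations you single out (the $\beta y(-0)\int_{\Real_-}\Psi$ terms at $x=-\nu$, and the emergence of $\beta\zeta_\alpha y(-0)$ at $x=\nu$) are exactly the ones the paper's proof relies on, and your bookkeeping of where $\nu^{1/2}$ and $\eta^{-1}$ arise matches the paper's.
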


\begin{proof}
Assume  $\eps$ and $\nu$ are  small enough, and  $\eta\geq 1$. From our choice of $u_\alpha$, we have that
$u_\alpha(-\eta)=1$, $u_\alpha(\eta)=\theta_\alpha$, and $u'_\alpha(\pm\eta)=0$. Also $g_\en'(\pm \eta)=g_\en'(\pm1)$, and the bounds
\begin{equation}\label{EstEpsG(eps/nu)}
    \eps|g_\en(\pm\eta)|\leq c_1 \nu\|f\|
\end{equation}
hold, owing to \eqref{EstEpsG}.
These relations will be used repeatedly in the proof.

According to Proposition~\ref{PropW22Corrector}, it is sufficient to estimate the jumps of $w_\en$ and $w'_\en$. At the point $x=-\nu$ we have
\begin{align*}
    [w_\en]_{-\nu}&=y(-0)+\beta\nu y(-0) h_\en(-1)+\eps g_\en(-\eta)+\eps^2v_\en(-\eta)-y(-\nu),\\
    [w'_\en]_{-\nu}&=\beta y(-0) h'_\en(-1)+ g'_\en(-1)+\eps v'_\en(-\eta) -y'(-\nu).
\end{align*}
 The first of these jumps can be bounded as follows:
\begin{multline*}
    |[w_\en]_{-\nu}|\leq |y(-0)-y(-\nu)|+\nu|\beta| |y(-0)| |h_\en(-1)|\\
    +\eps |g_\en(-\eta)|+\eps^2|v_\en(-\eta)|\leq c_2\nu\|f\|,
\end{multline*}
by \eqref{HenEst}, \eqref{EstEpsG(eps/nu)}, Proposition \ref{PropEstYoverF}, and Lemma~\ref{LemmaVenProperties}.
Next, taking into account \eqref{HenPrimeAtPm1} and the initial conditions for $g_\en$, we see that
\begin{align*}
    [w'_\en]_{-\nu}&=\beta y(-0) \Bigl(- \int_{\Real_-}\kern-4pt \Psi\,ds+O(\eta^{-1})\Bigr)+ y'(-0)+\beta y(-0)\int_{\Real_-}\kern-4pt \Psi\,ds-y'(-\nu)\\&+\eps v'_\en(-\eta)
= y'(-0)-y'(-\nu)+O(\eta^{-1})y(-0)+O(\nu^{1/2})\|f\|,
\end{align*}
as $\eta\to \infty$ and $\nu\to 0$.
We can now repeatedly apply Proposition \ref{PropEstYoverF} to deduce $\left|[w'_\en]_{-\nu}\right|\leq c_3\varrho(\nu,\eta)\|f\|$.

Let us turn  to the jumps  at the point $x=\nu$. We get
\begin{align*}
    [w_\en]_{\nu}&=y(\nu)-\theta_\alpha y(-0)-\beta\nu y(-0) h_\en(1)-\eps g_\en(\eta)-\eps^2v_\en(\eta),\\
    [w'_\en]_{\nu}&=y'(\nu)-\beta y(-0) h'_\en(1)-g'_\en(1)-\eps v'_\en(\eta).
\end{align*}
Recall that  $y(+0)=\theta_\alpha y(-0)$, since $y\in \dmn S$.
This gives
$$
\left|[w_\en]_{\nu}\right|\leq |y(\nu)-y(+0)|+c_4 \nu |y(-0)| +\eps|g_\en(\eta)|+\eps^2|v_\en(\eta)|\leq c_5\nu\|f\|
$$
by \eqref{EstY(t)-Y(0)}, \eqref{VenEst}, and \eqref{EstEpsG(eps/nu)}.
Also, combining the  relation $y'(+0)=\theta_\alpha^{-1} y'(-0)+\beta\zeta_\alpha y(-0)$ and asymptotic formulas \eqref{HenPrimeAtPm1}, \eqref{GenPrimeAt1}, we deduce that
\begin{align*}
    [w'_\en]_{\nu}&=y'(\nu)-\beta y(-0) \Bigl(\theta_\alpha\int_{\Real_+}\kern-4pt \Psi\,ds+O(\eta^{-1})\Bigr)
\\
    &\phantom{=y'(\nu)\,}-\Bigl( \theta_\alpha^{-1}y'(-0)+\theta_\alpha^{-1}\beta y(-0)\int_{\Real_-}\kern-4pt \Psi\,ds+O(\eta^{-1})\|f\|\Bigr)-\eps v'_\en(\eta)
\\
    &=y'(\nu)-\theta_\alpha^{-1}y'(-0)-\beta\zeta_\alpha y(-0) +O(\eta^{-1})\|f\|+O(\nu^{1/2})\|f\|
\\
    &=y'(\nu)-y'(+0)+O(\eta^{-1}+\nu^{1/2})\|f\|,
\end{align*}
hence that $\left|[w'_\en]_{\nu}\right|\leq c_6\varrho(\nu,\eta)\|f\|$.
This inequality completes the proof.
\end{proof}

\begin{proof}[Proof of Theorem~\ref{ThmCaseEpsNu-1Go0}]
We first compute $(S_\en-z)y_\en$.
For the convenience of the reader we write $y_\en=w_\en+r_\en$ in the detailed form
\begin{equation}\label{hatYen}
    y_\en(x)=
    \begin{cases}
        y(x)+r_\en(x) &\text{if }|x|>\nu,\\
        y(-0)\bigl(u_\alpha(x/\eps)+\nu \beta h_\en(x/\nu)\bigr)+\eps g_\en( x/\eps)+\eps^2v_\en(x/\eps) &\text{if }|x|\leq\nu.
    \end{cases}
\end{equation}
Recall that $r_\en$ is zero in $(-\nu,\nu)$, by construction. Set $f_\en=(S_\en-z)y_\en$.
If $|x|>\nu$, then
\begin{equation*}
    f_\en(x)=
   \left(-\tfrac{d^2}{dx^2}-z\right)y_\en(x)=f(x)-r''_\en(x)-z r_\en(x).
\end{equation*}
Next, for $|x|<\nu$, we have
\begin{align*}\textstyle
  f_\en(x)=&\left(-\tfrac{d^2}{dx^2}+\alpha\eps^{-2}\Phi\left(\tfrac{x}{\eps}\right)
    +\beta\nu^{-1}\Psi\left(\tfrac{x}{\nu}\right)-z\right)y_\en(x)
\\
        = &\eps^{-2}\, y(-0)\Bigl\{ -u''_\alpha\left(\tfrac x\eps\right)+\alpha \Phi\left(\tfrac x\eps\right)u_\alpha\left(\tfrac x\eps\right)\Bigr\}
\\
     +&\nu^{-1} \,  \beta y(-0) \Bigl\{ -h''_\en\left(\tfrac x\nu\right)+\Psi\left(\tfrac x\nu\right)u_\alpha\left(\tfrac{x}{\eps}\right)\Bigr\}
\\
      +&\eps^{-1}\,
      \Bigl\{ -g_\en''\left(\tfrac x\eps\right)+\alpha \Phi\left(\tfrac x\eps\right)g_\en\left(\tfrac x\eps\right)
      +\eta\alpha \beta y(-0)\Phi\left(\tfrac x\eps\right)h_\en\left(\tfrac x\nu\right)\Bigr\}
\\
       +&\Bigl\{-v''_\en\left(\tfrac x\eps\right)+\alpha\Phi\left(\tfrac x\eps\right)v_\en\left(\tfrac x\eps\right)\Bigr\}
\\
     +&\beta\Psi\left(\tfrac x\nu\right)\Bigl\{\beta y(-0) h_\en\left(\tfrac x\nu\right)+\eta^{-1} g_\en\left(\tfrac x\eps\right)+\eps\eta^{-1} v_\en\left(\tfrac x\eps\right)\Bigr\}
     - z y_\en(x)
\\
    =&f(x)+\beta\Psi\left(\tfrac x\nu\right)\Bigl\{\beta y(-0) h_\en\left(\tfrac x\nu\right)+\eta^{-1} g_\en\left(\tfrac x\eps\right)+\eps\eta^{-1} v_\en\left(\tfrac x\eps\right)\Bigr\}- z y_\en(x),
\end{align*}
since  $u_\alpha$, $h_\en$, $g_\en$, and $v_\en$ are solutions to equations \eqref{EqUalpha},
\eqref{ProblHen}--\eqref{ProblVen}  respectively.

Thus $(S_\en-z)y_\en=f-q_\en$, and consequently $y_\en=(S_\en-z)^{-1}(f-q_\en)$, where
\begin{multline}
  q_\en=r''_\en+z r_\en+z y_\en\chi_\nu\\
  -\beta\Psi(\nu^{-1}\,\cdot\,)\bigl(\beta y(-0) h_\en(\nu^{-1}\,\cdot\,)+
    \eta^{-1} g_\en(\eps^{-1}\,\cdot\,)+\eps\eta^{-1} v_\en(\eps^{-1}\,\cdot\,)\bigr).
\end{multline}
Recall that $\chi_\nu$ is the characteristic function of $[-\nu,\nu]$.
Owing to Lemmas~\ref{LemmaHenProperties}--\ref{LemmaVenProperties}, we have
\begin{align}\nonumber
    &|y(-0)|\,\left|\Psi\left(\tfrac x\nu\right) h_\en\left(\tfrac x\nu\right)\right|\leq c_1 \|h_\en\|_{C(\cI)}\|f\|\,\chi_\nu(x)\leq c_2\|f\|\,\chi_\nu(x),\\
    &\begin{aligned}\label{EtaPsiGenEst}
    \eta^{-1} \left|\Psi\left(\tfrac x\nu\right)g_\en\left(\tfrac x\eps\right)\right|\leq
    c_3 \eta^{-1}&\chi_\nu(x)\max_{x\in [-\nu,\nu]}|g_\en\left(\tfrac x\eps\right)|\\
    &\leq c_4\eta^{-1} (1+\eta) \|f\|\,\chi_\nu(x)\leq c_5\|f\|\,\chi_\nu(x),
    \end{aligned}\\\label{EtaEpsPsiVenEst}
      &\eps\eta^{-1} |\Psi\left(\tfrac x\nu\right)v_\en\left(\tfrac x\eps\right)|\leq
      c_6 \eps\eta^{-1}\chi_\nu(x)\max_{x\in [-\nu,\nu]}|v_\en\left(\tfrac x\eps\right)|\leq c_7\nu^{1/2}\|f\|\,\chi_\nu(x),
\end{align}
and hence $\|q_\en\|\leq c\varrho(\nu,\eta)\|f\|$, in view of Corollary~\ref{CorWenSmall} and Lemma~\ref{LemmaJumpsAreSmall}. Note also that $\|\chi_\nu\|=(2\nu)^{1/2}$.
Therefore
\begin{multline}\label{Ren-Yen}
     \|(S_\en-z)^{-1}f-y_\en\|=\|(S_\en-z)^{-1}q_\en\|
     \\
     \leq\|(S_\en-z)^{-1}\|\,\|q_\en\|\leq C\varrho(\nu,\eta)\|f\|.
\end{multline}
Note that the resolvents $(S_\en-z)^{-1}$ are uniformly bounded with respect to $\eps$ and $\nu$, because the operators $S_\en$ are self-adjoint.

We next observe that $y_\en-y=r_\en+(w_\en-y)\chi_\nu$.
Thus
\begin{equation}\label{Yen-Y}
\|y_\en-y\|\leq c \varrho(\nu,\eta)\|f\|,
\end{equation}
in view of  Corollary \ref{CorWenSmall} and Lemma \ref{LemmaJumpsAreSmall}. Form this   we deduce for  $z\in \mathbb{C}\setminus\Real$ that
\begin{align*}
     \|(S_\en-z)^{-1}f-(S-z)^{-1}f\|&\leq
     \|(S_\en-z)^{-1}f-y_\en\|+\|y_\en-(S-z)^{-1}f\|
     \\
     &\leq\|(S_\en-z)^{-1}f-y_\en\|+\|y_\en-y\|\leq C\varrho(\nu,\eta)\|f\|,
\end{align*}
for all $f\in L_2(\Real)$, by \eqref{Ren-Yen} and \eqref{Yen-Y}.
The proof is completed by noting that $\varrho(\nu,\eta)$ tends to zero as $\nu\to 0$ and $\eta\to \infty$, that is to say, as $\nu\to 0$ and $\eps\to 0$.
\end{proof}

\subsection{Non-resonant case}
 Here we  prove  the  following  theorem:
\begin{thm}\label{ThmCaseEpsNu-1Go0NR}
    Suppose the potential $\alpha \Phi$ is not resonant; then the operators $S_\en$ converge to the direct sum  $S_-\oplus S_+$  of the Dirichlet half-line Schr\"odinger operators as $\nu\to 0$ and $\eta\to \infty$  in the norm resolvent sense.
\end{thm}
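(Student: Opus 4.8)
The plan is to mimic the scheme of Theorem~\ref{ThmCaseEpsNu-1Go0}: fix $f\in L_2(\Real)$ and $z\in\mathbb{C}\setminus\Real$, set $y=(S_-\oplus S_+-z)^{-1}f$, and build a single trial function $y_\en\in W_2^2(\Real)=\dmn S_\en$ that simultaneously approximates $y$ and $(S_\en-z)^{-1}f$ in $L_2(\Real)$, uniformly for $f$ in bounded sets. The essential difference from the resonant case is that now $y$ satisfies the Dirichlet conditions $y(-0)=y(+0)=0$, so the leading boundary-layer profile must be built from the \emph{would-be} half-bound state of $\alpha\Phi$ rather than from a genuine one.

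\emph{Construction of the inner profile.} Outside $[-\nu,\nu]$ I take $w_\en=y$. On $[-\nu,\nu]$ I pass to the fast variable $t=x/\eps$ (so the layer becomes $t\in[-\eta,\eta]$) and set $w_\en(x)=\eps\,\Theta(x/\eps)+\eps^2 v_\en(x/\eps)$, where $v_\en$ solves the Cauchy problem \eqref{ProblVen} (its bounds are furnished verbatim by Lemma~\ref{LemmaVenProperties}), and $\Theta$ is the solution of the homogeneous problem
\[
 -\Theta''+\alpha\Phi\,\Theta=0 \ \text{ on }\cI,\qquad \Theta'(-1)=y'(-0),\quad \Theta'(1)=y'(+0),
\]
extended affinely to $\Real$. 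Here lies the single place where non-resonance enters: by the discussion around \eqref{NeumanProblemWithAlpha}, the homogeneous Neumann problem has only the trivial solution precisely when $\alpha\notin\Lambda_\Phi$, so by the Fredholm alternative this slope-prescribed two-point problem is uniquely solvable, with $\|\Theta\|_{C^1(\cI)}\le C\|f\|$ in view of Proposition~\ref{PropEstYoverF}. (In the resonant case this problem is not solvable, which is exactly why one is forced there to use the genuine half-bound state $u_\alpha$.) Since $\Theta$ grows at most linearly, $|\eps\Theta(x/\eps)|\le C\nu\|f\|$ on $[-\nu,\nu]$; in particular the leading profile is of order $\nu$, not of order $1$.

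\emph{Estimates and conclusion.} Because $y(\pm0)=0$, at the matching points one has $y(\pm\nu)=y'(\pm0)(\pm\nu)+O(\nu^2)\|f\|$ and $y'(\pm\nu)=y'(\pm0)+O(\nu^{1/2})\|f\|$ by Proposition~\ref{PropEstYoverF}; comparing with $\eps\Theta(\pm\eta)$ and $\Theta'(\pm\eta)=y'(\pm0)$ shows that the jumps $[w_\en]_{\pm\nu}$ and $[w'_\en]_{\pm\nu}$ are $O(\eps+\nu^{1/2})\|f\|$. Proposition~\ref{PropW22Corrector} then supplies a corrector $r_\en$ of the same size with $y_\en:=w_\en+r_\en\in W_2^2(\Real)$. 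A direct computation (the $\Phi$-part annihilates $\eps\Theta$ via \eqref{EqUalpha}-type cancellation, while $\eps^2v_\en$ reproduces $f$ on $[-\nu,\nu]$) gives $(S_\en-z)y_\en=f-q_\en$ with
\[
 q_\en=r_\en''+z\,r_\en+\chi_\nu\Bigl(z\,w_\en-\tfrac{\beta}{\nu}\Psi(\,\cdot\,/\nu)\,w_\en\Bigr).
\]
The crucial point is that the $\delta$-like cross term $\tfrac{\beta}{\nu}\Psi(\,\cdot\,/\nu)\,w_\en\chi_\nu$, which in the resonant case blows up and must be removed by an $h_\en$-type corrector, is here harmless: since $w_\en=O(\nu)$ on the layer it is $O(1)$ pointwise and hence $O(\nu^{1/2})\|f\|$ in $L_2(\Real)$. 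Collecting the bounds yields $\|q_\en\|\le C(\nu^{1/2}+\eps)\|f\|$ and, likewise, $\|y_\en-y\|\le C(\nu^{1/2}+\eps)\|f\|$. Using the uniform bound $\|(S_\en-z)^{-1}\|\le|\Im z|^{-1}$ from self-adjointness, the triangle inequality gives
\[
 \|(S_\en-z)^{-1}f-(S_-\oplus S_+-z)^{-1}f\|\le C(\nu^{1/2}+\eps)\|f\|\to0
\]
as $\nu\to0$, $\eta\to\infty$ (whence $\eps\to0$), which is the asserted norm resolvent convergence. \emph{The main obstacle} is the inner profile: recognizing that the correct leading term is the affine-matched solution of the slope-prescribed Neumann problem, and that its solvability is equivalent to non-resonance. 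Once this is in place the remaining estimates are lighter than in the resonant case, precisely because the non-resonant profile is of order $\nu$ and the $\delta$-potential interaction no longer produces a singular residual.
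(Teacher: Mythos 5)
Your proposal is correct and follows essentially the same route as the paper's own proof: your $\Theta$ is exactly the paper's function $g$ (the solution of the slope-prescribed Neumann-type problem, uniquely solvable precisely because $\alpha\notin\Lambda_\Phi$), and the trial function $\eps\Theta(x/\eps)+\eps^2 v_\en(x/\eps)$, the corrector $r_\en$, the residual $q_\en$, and the $O(\nu^{1/2}+\eps)$ estimates all coincide with the paper's argument. The only (harmless) imprecision is the claim $y(\pm\nu)=\pm\nu\,y'(\pm0)+O(\nu^2)\|f\|$; Proposition~\ref{PropEstYoverF} only yields an $O(\nu^{3/2})\|f\|$ remainder, but all that is needed is $|y(\pm\nu)|\le C\nu\|f\|$, which follows directly from \eqref{EstY(t)-Y(0)} and $y(\pm0)=0$.
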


As a matter of fact, this result is implicitly contained in the previous proof.
In the non-resonant case, equation \eqref{EqUalpha} admits only one $L_\infty(\Real)$-solution which is trivial.
Additionally, for each $f\in L_2(\Real)$, the function $y=(S_-\oplus S_+-z)^{-1}f$ satisfies the condition $y(0)=0$.
Roughly speaking, the proof of Theorem~\ref{ThmCaseEpsNu-1Go0NR} can be derived from the previous one
with $u_\alpha$ and $h_\en$ replacing  the zero functions and $y(\pm 0)$ replacing $0$ in the corresponding formulas.

\begin{proof}
In this case the approximation $y_\en$ is rather simpler than \eqref{hatYen}.
Whereas $y(0)=0$, we set
\begin{equation*}
    y_\en(x)=
    \begin{cases}
        y(x)+r_\en(x) &\text{if }|x|>\nu,\\
        \eps g( x/\eps)+\eps^2 v_\en(x/\eps) &\text{if }|x|\leq\nu.
    \end{cases}
\end{equation*}
Here $y=(S_-\oplus S_+-z)^{-1}f$. As above, \, $r_\en$ is a $W_2^2$-corrector of the form \eqref{CorrectoR} and
$v_\en$ is a  solutions of \eqref{ProblVen}.
The function $g$ is a  solutions to the boundary value problem
\begin{equation*}
     g''-\alpha \Phi(t)g= 0,\quad t\in\Real,\qquad
      g'(-1)=y'(-0), \quad g'(1)=y'(+0).
\end{equation*}
Such a solution exists, since $\alpha$ is not an eigenvalue of \eqref{NeumanProblemWithAlpha}.
In addition, $g$ is  linear  outside $\cI$, so it satisfies
the inequalities of the form \eqref{GenEstT}, \eqref{GenPrimeEstR} and \eqref{EtaPsiGenEst}.

Reasoning as in the proof of  Lemma~\ref{LemmaJumpsAreSmall} we deduce that
\begin{align*}
    &|y(\pm\nu)-\eps g(\pm \eta)|\leq |y(\pm\nu)|+\eps |g(\pm \eta)|+\eps^2 |v_\en(\pm\eta)|\leq c_1\nu\|f\|,\\
    &|y'(\pm\nu)- g'(\pm \eta)|\leq |y'(\pm\nu)-y'(\pm 0)|+\eps |v'_\en(\pm\eta)|\leq c_2\nu^{1/2}\|f\|,
\end{align*}
provided $\eta \gg 1$, and  hence that
\begin{equation}\label{RenEstNR}
    \max_{x\in \Real\setminus\{-\nu,\nu\}}\bigl|r^{(k)}_\en(x)\bigr|\leq C\nu^{1/2}\|f\|,\qquad k=0,1,2,
\end{equation}
 by Proposition~\ref{PropW22Corrector}. Furthermore $(S_\en-z)y_\en= f-q_\en$ with
\begin{equation*}
    q_\en(x)=r''_\en(x)+z r_\en(x)+\eps z\chi_\nu(x)g(\tfrac{x}{\eps})
    -\beta\Psi(\tfrac{x}{\nu})\left(\eta^{-1} g(\tfrac{x}{\eps})+\eta^{-1}\eps v_\en(\eps^{-1}\,\cdot\,)\right),
\end{equation*}
by calculations as in the proof of Theorem~\ref{ThmCaseEpsNu-1Go0}. Also $\|q_\en\|\leq c_3\nu^{1/2}\|f\|$,
in view of \eqref{EtaPsiGenEst}, \eqref{EtaEpsPsiVenEst}, and \eqref{RenEstNR}.  This implies
$\|(S_\en-z)^{-1}f-y_\en\|\leq c_4\nu^{1/2}\|f\|$.
The norm resolvent convergence of $S_\en$ towards $S_-\oplus S_+$  now follows
precisely as in the proof of Theorem~\ref{ThmCaseEpsNu-1Go0}.
\end{proof}

\section{Convergence of the operators $S_\en$. The case $\nu\sim c \eps$.}\label{SecFinite}
In this short section we apply the results of our recent work~\cite{Golovaty:2012} to the case
$\nu\eps^{-1}\to \lambda$ and $\lambda>0$.
The parameters $\eps$ and $\nu$ are in this case connected by the asymptotic relation
$\nu_\eps=\lambda\eps+o(\eps)$ as $\eps\to 0$. Let us consider the operator family
\begin{equation}\label{OprHlambda}
  H_\lambda=
    \begin{cases}
        S(\theta_\alpha,\beta \kappa(\alpha,\lambda))& \text{if } \alpha\in\Lambda_\Phi,\\
        S_-\oplus S_+& \text{otherwise}
    \end{cases}
\end{equation}
for $\lambda>0$, where $\kappa$ is given by \eqref{MapKappa}.
For convenience, we shall write $S_\en(\Phi,\Psi)$ for $S_\en$, and $\kappa(\alpha,\lambda; \Phi,\Psi)$ for $\kappa(\alpha,\lambda)$ indicating the dependence of $S_\en$ and $\kappa$ on  potentials $\Phi$ and $\Psi$.

For the case $\nu=\eps$, it was proved in \cite{Golovaty:2012} that operators $S_{\eps\eps}(\Phi,\Psi)$ converge to $H_1$ in the norm resolvent sense,
as $\eps\to 0$. Moreover, this result is stable under a small perturbation the potential $\Psi$.
If a sequence of  potentials
$\Psi_\eps$ of compact support is uniformly bounded in $L_\infty(\Real)$ and $\Psi_\eps\to \Psi$ in $L_1(\Real)$
as $\eps\to 0$, then $S_{\eps\eps}(\Phi,\Psi_\eps)\to H_1$ in
the sense of the norm resolvent convergence.
Note that all estimates containing $\Psi$ in the proofs of Theorems~4.1 and 5.1 in \cite{Golovaty:2012} remain true with $\Psi$ replaced by $\Psi_\eps$ due to the uniform boundedness of $\Psi_\eps$ in $L_\infty(\Real)$. Next,
the $L_1$-convergence of $\Psi_\eps$ implies
$\kappa(\alpha,1; \Phi,\Psi_\eps)\to \kappa(\alpha,1; \Phi,\Psi)$, as $\eps\to 0$, for all $\alpha\in \Lambda_\Phi$.
Observe also that
\begin{equation*}
    S_\el(\Phi,\Psi)=-\frac{d^2}{dx^2}+\frac{\alpha}{\eps^2}\Phi\left(\frac{x}{\eps}\right)
    +\frac{\beta}{\lambda\eps}\Psi\left(\frac{x}{\lambda\eps}\right) =S_{\eps, \eps}(\Phi,\Upsilon)
\end{equation*}
with $\Upsilon=\frac1\lambda\Psi(\frac 1\lambda\,\cdot\,)$.
Next, we see that
\begin{multline*}
  \kappa(\alpha,1; \Phi,\Upsilon)=\theta_\alpha^{-1} \int_\Real\frac{1}{\lambda}\Psi\left(\frac{t}{\lambda}\right) u^2_\alpha( t)\,dt\\
  =
    \theta_\alpha^{-1}\int_\Real\Psi\left(\tau\right) u^2_\alpha(\lambda \tau)\,d\tau=\kappa(\alpha,\lambda; \Phi,\Psi).
\end{multline*}
Therefore $S_\el(\Phi,\Psi)\to H_\lambda$ as $\eps\to 0$ in the sense of uniform convergence of  resolvents.

Repeating the previous scaling arguments leads to $S_\en(\Phi,\Psi)=S_\el(\Phi,\Psi_\eps)$,
where $\Psi_\eps=\gamma_\eps\Psi(\gamma_\eps\,\cdot\,)$ and  $\gamma_\eps=\lambda\eps/\nu_\eps$.
Since $\gamma_\eps\to 1$ as $\eps$ goes to $0$, $\Psi_\eps\to \Psi$ in $L_1(\Real)$ as $\eps\to 0$. Hence both operators $S_\en(\Phi,\Psi)$ and $S_\el(\Phi,\Psi)$
converge to the same limit $H_\lambda$.
We have proved:

\begin{thm}
If the ratio $\nu/\eps$ tends to a finite positive number $\lambda$ as $\nu, \eps\to 0$,
then  $S_\en$  converge to the operator $H_\lambda$ defined by \eqref{OprHlambda} in the norm resolvent sense.
\end{thm}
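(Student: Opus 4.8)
The plan is to deduce the finite-ratio case entirely from the equal-scale result $\nu=\eps$ established in~\cite{Golovaty:2012} by a pair of spatial rescalings, without re-doing any resolvent estimate. The only input beyond the $\nu=\eps$ theorem is its stability, which is recorded in the discussion above: if a family $\Psi_\eps$ is uniformly bounded in $L_\infty(\Real)$, supported in a fixed compact set, and $\Psi_\eps\to\Psi$ in $L_1(\Real)$, then $S_{\eps\eps}(\Phi,\Psi_\eps)\to H_1(\Phi,\Psi)$ in the norm resolvent sense. This holds because every estimate involving $\Psi$ in the proofs of~\cite{Golovaty:2012} uses only $\|\Psi_\eps\|_{L_\infty}$ and $\|\Psi_\eps\|_{L_1}$, while the quantity $\kappa(\alpha,1;\Phi,\cdot\,)$ fixing the limit operator depends on $\Psi$ only through its $L_1$-limit.

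First I would treat the idealized relation $\nu=\lambda\eps$. A direct change of variables gives
\[
S_\el(\Phi,\Psi)=S_{\eps\eps}(\Phi,\Upsilon),\qquad \Upsilon=\tfrac1\lambda\Psi\bigl(\tfrac1\lambda\,\cdot\,\bigr),
\]
since $\tfrac{\beta}{\lambda\eps}\Psi(\tfrac{x}{\lambda\eps})=\tfrac{\beta}{\eps}\Upsilon(\tfrac{x}{\eps})$. Applying the $\nu=\eps$ theorem to the pair $(\Phi,\Upsilon)$ yields $S_{\eps\eps}(\Phi,\Upsilon)\to H_1(\Phi,\Upsilon)$, and it remains to identify the limit. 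The substitution $\tau=t/\lambda$ in~\eqref{MapKappa} gives $\kappa(\alpha,1;\Phi,\Upsilon)=\kappa(\alpha,\lambda;\Phi,\Psi)$, and since $\theta_\alpha$ depends only on $\Phi$, the operators $H_1(\Phi,\Upsilon)$ and $H_\lambda(\Phi,\Psi)$ coincide both for resonant and for non-resonant $\alpha$. Hence $S_\el(\Phi,\Psi)\to H_\lambda$.

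Next I would remove the idealization. Writing $\gamma_\eps=\lambda\eps/\nu_\eps$, a further rescaling converts the scale $\lambda\eps$ back into $\nu_\eps$ and produces $S_\en(\Phi,\Psi)=S_\el(\Phi,\Psi_\eps)$ with $\Psi_\eps=\gamma_\eps\Psi(\gamma_\eps\,\cdot\,)$. Because $\nu_\eps/\eps\to\lambda$, one has $\gamma_\eps\to1$, so $\Psi_\eps\to\Psi$ in $L_1(\Real)$, the family is uniformly bounded in $L_\infty(\Real)$, and its supports are eventually contained in a fixed compact neighbourhood of $\cI$. Transporting this perturbation through the first rescaling, i.e. passing to $\Upsilon_\eps=\tfrac1\lambda\Psi_\eps(\tfrac1\lambda\,\cdot\,)\to\Upsilon$ in $L_1(\Real)$, the stability statement applies and gives $S_\en(\Phi,\Psi)\to H_\lambda$.

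The substance lies in~\cite{Golovaty:2012}; here the work is bookkeeping. The step I expect to require the most care is checking that the stability hypotheses survive both changes of variable — concretely, that the rescaled potentials $\Psi_\eps$ and their images $\Upsilon_\eps$ stay uniformly bounded in $L_\infty(\Real)$ with uniformly bounded supports as $\gamma_\eps\to1$, so that the constants in the estimates of~\cite{Golovaty:2012} remain independent of $\eps$. Once this is verified, the convergence and the identification of the limit follow at once from the scaling identities and the computation of $\kappa$.
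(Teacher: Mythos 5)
Your proposal is correct and follows essentially the same route as the paper: the scaling identity $S_\el(\Phi,\Psi)=S_{\eps\eps}(\Phi,\Upsilon)$ with $\Upsilon=\tfrac1\lambda\Psi(\tfrac1\lambda\,\cdot\,)$, the substitution showing $\kappa(\alpha,1;\Phi,\Upsilon)=\kappa(\alpha,\lambda;\Phi,\Psi)$, and then the reduction $S_\en(\Phi,\Psi)=S_\el(\Phi,\Psi_\eps)$ with $\gamma_\eps=\lambda\eps/\nu_\eps\to1$ handled by the $L_1$-stability of the result in \cite{Golovaty:2012}. This is exactly the paper's argument, including the appeal to uniform $L_\infty$-boundedness to keep the constants of \cite{Golovaty:2012} under control.
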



\section{Convergence of the operators $S_\en$. The case $\nu\eps^{-1}\to 0$.}\label{SecInfty}
We discuss in this section the case of the fast contracting $\Psi$-shaped potential  relative to the $\Phi$-shaped one. Therefore that $\nu\eps^{-1}\to 0$ as $\nu, \eps\to 0$.  First we note that if $\eps\to 0$ and $\eta\to 0$, then $\nu\to 0$.
As in Section~\ref{SecZero}, the resonant and non-resonant cases will be treated separately.

\subsection{Resonant case}
Let us consider  the operator $S(\theta_\alpha, \beta\mu_\alpha)$, where $\mu_\alpha=\mu(\alpha)$ and the mapping $\mu\colon \Lambda_\Phi\to \Real$ is given by \eqref{MapMu}.

\begin{thm}\label{ThmCaseEpsNu-1GoInfty}
   Suppose   $\Phi, \Psi \in \mathcal{P}$ and $\alpha\in\Lambda_\Phi$; then the operator family $S_\en$  converges to   $S(\theta_\alpha, \beta\mu_\alpha)$ in the norm resolvent sense, as $\eps,\eta \to 0$.
\end{thm}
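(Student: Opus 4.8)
The plan is to mirror the proof of Theorem~\ref{ThmCaseEpsNu-1Go0}, interchanging the two length scales: since now $\nu\ll\eps$, the $\Psi$-shaped potential lives deep inside the support of the $\Phi$-shaped one, so the natural matching region is $|x|\le\eps$ (the $\Phi$-scale). I fix $f\in L_2(\Real)$ and $z\in\mathbb C\setminus\Real$, put $y=(S(\theta_\alpha,\beta\mu_\alpha)-z)^{-1}f$, and construct a single approximant $y_\en\in\dmn S_\en$ that is $L_2(\Real)$-close to both $(S_\en-z)^{-1}f$ and $y$, uniformly for $f$ in bounded sets. Outside the matching region I set $y_\en=y$ plus a $W_2^2$-corrector $r_\en$ from Proposition~\ref{PropW22Corrector}, while for $|x|\le\eps$ I take the inner ansatz
\begin{equation*}
  w_\en(x)=y(-0)\bigl(u_\alpha(x/\eps)+\beta\nu\,h_\en(x/\nu)\bigr)+\eps\,g_\en(x/\eps)+\eps^2 v_\en(x/\eps),
\end{equation*}
where $h_\en$ solves \eqref{ProblHen}, the corrector $v_\en$ solves \eqref{ProblVen} (now with the cut-off $\chi_1$, since the inner region corresponds to $|t|\le1$), and $g_\en$ solves $g''-\alpha\Phi g=\alpha\beta\eta\,y(-0)\,\Phi(t)\,h_\en(t/\eta)$ with $g(-1)=0$ and $g'(-1)=y'(-0)-\beta y(-0)h_\en'(-1)$. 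As in Lemma~\ref{LemmaHenProperties}, the scaled half-bound state $u_\alpha(x/\eps)$ provides the value matching $\phi(+0)=\theta_\alpha\phi(-0)$ automatically, and $h_\en$ absorbs the singular source $\beta\nu^{-1}\Psi(x/\nu)u_\alpha(x/\eps)$ exactly.

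The genuinely new ingredient is the behaviour of the correctors as $\eta\to0$ rather than $\eta\to\infty$. Since $u_\alpha$ is Lipschitz, $u_\alpha(\eta s)\to u_\alpha(0)$, so now $h_\en'(1)\to u_\alpha(0)\int_{\Real_+}\Psi$ and $h_\en'(-1)\to-u_\alpha(0)\int_{\Real_-}\Psi$; moreover $h_\en(t/\eta)$ is evaluated in its linear tail over $\supp\Phi$, so that $\eta\,h_\en(t/\eta)$ stays bounded and converges to $h_\en'(\sgn t)\,t$. This keeps the source of the $g$-equation bounded in $L_\infty(\cI)$, and Propositions~\ref{PropEstYoverF} and~\ref{PropCauhyProblEst} then yield the analogues of Lemmas~\ref{LemmaGenProperties}--\ref{LemmaVenProperties}. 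The crux is the value $g_\en'(1)$: multiplying the $g$-equation by $u_\alpha$ and integrating over $\cI$ gives $\theta_\alpha g_\en'(1)-g_\en'(-1)=\alpha\beta\eta\,y(-0)\int_{-1}^1\Phi\,h_\en(\cdot/\eta)\,u_\alpha\,dt$, exactly as in Lemma~\ref{LemmaGenProperties}. Passing to the limit and using the two moment identities
\begin{equation*}
  \alpha\int_0^1\Phi(t)\,t\,u_\alpha(t)\,dt=u_\alpha(0)-\theta_\alpha,\qquad
  -\alpha\int_{-1}^0\Phi(t)\,t\,u_\alpha(t)\,dt=u_\alpha(0)-1,
\end{equation*}
which follow by integrating $u_\alpha''=\alpha\Phi u_\alpha$ against $t$ and invoking $u_\alpha'(\pm1)=0$, $u_\alpha(-1)=1$, $u_\alpha(1)=\theta_\alpha$, one finds after a short computation that the right-edge derivative assembles into precisely $\theta_\alpha^{-1}y'(-0)+\beta\mu_\alpha y(-0)=y'(+0)$ with $\mu_\alpha=\theta_\alpha^{-1}u_\alpha^2(0)\int_\Real\Psi$. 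This reproduces the second interface condition in \eqref{PointInteractionConds}; the two factors of $u_\alpha(0)$ together with the $\theta_\alpha^{-1}$ arise exactly from these identities and the $u_\alpha$-weighting in the integration by parts.

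With these asymptotics the proof closes as in Section~\ref{SecZero}. Arguing as in Lemma~\ref{LemmaJumpsAreSmall}, the jumps $[w_\en]_{\pm\eps}$ are $O(\eps)\|f\|$ and $[w_\en']_{\pm\eps}$ are $O(\varrho(\eps,\eta))\|f\|$, where $\varrho(\eps,\eta)=\eps^{1/2}+\eta$: the $\eps^{1/2}$ comes from the H\"older bounds \eqref{EstY(t)-Y(0)} for $y$, and the $\eta$ from $u_\alpha(\eta\,\cdot\,)\to u_\alpha(0)$. Hence the corrector $r_\en$ of Proposition~\ref{PropW22Corrector} obeys $\max_x|r_\en^{(k)}(x)|\le C\varrho(\eps,\eta)\|f\|$ for $k=0,1,2$, and $y_\en=w_\en+r_\en\in W_2^2(\Real)$. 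A direct computation then gives $(S_\en-z)y_\en=f-q_\en$, where $q_\en$ collects $r_\en''+zr_\en$, the truncation terms supported on $|x|\le\eps$, and the residual $\Psi$-terms; estimating each in $L_2(\Real)$ via the corrector bounds and $\|\chi_\eps\|=(2\eps)^{1/2}$ yields $\|q_\en\|\le C\varrho(\eps,\eta)\|f\|$. Therefore $\|(S_\en-z)^{-1}f-y_\en\|\le C\varrho(\eps,\eta)\|f\|$ and $\|y_\en-y\|\le C\varrho(\eps,\eta)\|f\|$, and the triangle inequality gives the norm resolvent convergence. \textbf{The main obstacle} is the limit analysis of $g_\en'(1)$: one must simultaneously control $h_\en$ in its linear regime, so that $\eta\,h_\en(\cdot/\eta)$ remains bounded on $\supp\Phi$, and extract the exact constant $\mu_\alpha$ from the integral identity — it is here that the reversed scale ordering $\nu\ll\eps$ forces the appearance of $u_\alpha^2(0)$ in place of the boundary values $u_\alpha^\pm$ that govern the case $\eta\to\infty$.
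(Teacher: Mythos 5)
Your construction transplants the corrector scheme of Section~\ref{SecZero} (the case $\eta\to\infty$) to the reversed regime, and this is exactly where it breaks. With your assignment of sources --- $h_\en$ solving \eqref{ProblHen} and $g_\en$ absorbing the cross term $\alpha\Phi\cdot\beta\nu h_\en$ --- the residual $q_\en=f-(S_\en-z)y_\en$ still contains the term
\begin{equation*}
\beta\nu^{-1}\Psi\left(\tfrac x\nu\right)\cdot\eps\, g_\en\left(\tfrac x\eps\right)
=\beta\eta^{-1}\Psi\left(\tfrac x\nu\right) g_\en\left(\tfrac x\eps\right),
\end{equation*}
i.e.\ the $\delta$-like potential acting on the order-$\eps$ corrector. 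In Section~\ref{SecZero} the analogous term is harmless precisely because there $\eta^{-1}\to0$ (this is estimate \eqref{EtaPsiGenEst}); in your setting $\eta^{-1}\to\infty$. Nothing in your Cauchy data at $t=-1$ makes $g_\en(0)$ small, so generically $|g_\en(0)|\sim\|f\|$, and the term above has $L_\infty$-size of order $\eta^{-1}\|f\|$ on a support of measure $2\nu$, hence $L_2$-norm of order $\eta^{-1}\nu^{1/2}\|f\|=\eps\nu^{-1/2}\|f\|$. This tends to zero only when $\eps^2/\nu\to0$; for $\nu=\eps^2$ it stays of order one, and for $\nu=\eps^3$ it diverges like $\eps^{-1/2}$. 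So your claimed bound $\|q_\en\|\le C(\eps^{1/2}+\eta)\|f\|$ fails, and the proof does not cover the full regime $\nu/\eps\to0$ asserted by the theorem.

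The paper's proof is structured differently precisely to kill this term: the singular source $\beta y(-0)\,\eta^{-1}\Psi(\eta^{-1}t)u_\alpha(t)$ is placed into the $g$-equation \eqref{ProblGenInfty}, where it is handled through the $W_2^{-1}$-convergence of the $\delta$-like sequence (this is also how $u_\alpha^2(0)$, hence $\mu_\alpha$ of \eqref{MapMu}, emerges --- no moment identities needed); the corrector $h_\en$ is defined with source $\Psi(t)g_\en(\eta t)$ in \eqref{ProblHenInfty}, so that $\beta\nu\eps\,h_\en(x/\nu)$ cancels exactly the dangerous $\Psi g_\en$ cross term; and the $\Psi$-term is built into the $v$-equation \eqref{ProblVenInfty}. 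For what it is worth, your interface algebra is sound: the limits of $g'_\en(1)$ and of $\beta y(-0)h'_\en(1)$ each contain a term $\mp\beta y(-0)u_\alpha(0)\int_{\Real_+}\Psi\,dt$ which cancel upon summing, and your moment identities do reproduce $y'(+0)=\theta_\alpha^{-1}y'(-0)+\beta\mu_\alpha y(-0)$, so the corrector bounds and jump estimates go through. But repairing the proof requires reassigning the sources among the correctors as the paper does (or introducing an additional corrector absorbing $\eta^{-1}\Psi(\cdot/\nu)g_\en(\cdot/\eps)$), not merely sharpening the estimates.
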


Given $f\in L_2(\Real)$ and $z\in \mathbb{C}\setminus\Real$,  we write $y=(S-z)^{-1}f$, where $S=S(\theta_\alpha, \beta\mu_\alpha)$.
Note that $y$ satisfies the conditions
\begin{equation}\label{YcondsInfty}
    y(+0)=\theta_\alpha y(-0), \quad y'(+0)=\theta_\alpha^{-1} y'(-0)+\beta\mu_\alpha y(-0).
\end{equation}
Let us next guess $y_\en$
has the form
\begin{equation}\label{ApproxWeInfty}
  y_\en(x)=
  \begin{cases}
    y(x)+r_\en(x)&\text{for }|x|>\eps,\\
    y(-0)u_\alpha(x/\eps)+\eps g_\en( x/\eps)+\beta\nu\eps h_\en(x/\nu)+\eps^2 v_\en(x/\eps)&\text{for }|x|\leq\eps,
  \end{cases}
\end{equation}
 where $g_\en$, $h_\en$, and $v_\en$ are  solutions to the Cauchy problems
\begin{align}
\label{ProblGenInfty}
    &\begin{cases}\displaystyle
     g''-\alpha \Phi(t)g=\beta y(-0)\,\eta^{-1}\Psi(\eta^{-1} t)u_\alpha(t),\qquad t\in\Real,\\\displaystyle
      g(-1)=0, \quad g'(-1)=y'(-0);
    \end{cases}\\\label{ProblHenInfty}
     &\hskip12pt h''=  \Psi(t)g_\en(\eta t),\quad t\in\Real,\qquad
      h(-1)=0,\quad h'(-1)=0;
      \\ \label{ProblVenInfty}
     &\begin{cases}
        -v''+\alpha\Phi(t)v+\beta\eps\eta^{-1}\, \Psi(\eta^{-1} t)v=f(\eps t),\quad t\in \Real,\\
       \phantom{-} v(-1)=0,\quad v'(-1)=0
      \end{cases}
\end{align}
respectively. As above, $u_\alpha$ is the half-bound state for the potential $\alpha\Phi$, and
$r_\en$  adjusts this approximation so as to obtain an ele\-ment of $\dmn S_\en$.
According to Proposition~\ref{PropW22Corrector}, there exists  a  corrector function $r_\en$ that vanishes in $(-\eps,\eps)$.

\begin{lem}\label{LemmaGenPropertiesInfty}
If the ratio of $\nu$ to $\eps$ remains bounded as $\nu, \eps\to 0$, then
there exists a constant $C$ such that for all $f\in L_2(\Real)$
 \begin{equation}\label{GenEstInfty}
    \|g_\en\|_{C(\cI)}\leq C\|f\|.
 \end{equation}
 In addition,  $g'_\en(1)=y'(+0)+ O(\eta)\|f\|$ as  $\eps,\eta\to 0$.
\end{lem}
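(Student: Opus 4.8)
The plan is to analyze the solution of the Cauchy problem \eqref{ProblGenInfty} by the variation-of-parameters representation from Proposition~\ref{PropCauhyProblEst}, treating the right-hand side as a forcing term that concentrates near the origin as $\eta\to 0$. First I would write
\begin{equation*}
   g_\en(t)=y(-0)u_\alpha(t)\int_{-1}^{t}\!\cdots\,+\ldots,
\end{equation*}
more precisely apply \eqref{CPSolRepresentation} with $q=\alpha\Phi$ and $f$ replaced by the source $\beta y(-0)\eta^{-1}\Psi(\eta^{-1}t)u_\alpha(t)$, using $v_1,v_2$ the fundamental solutions of $v''-\alpha\Phi v=0$ with data at $t=-1$. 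The homogeneous part $y(-0)v_1(t)\cdot 0+y'(-0)v_2(t)$ is bounded in $C(\cI)$ by Proposition~\ref{PropEstYoverF}, since $|y'(-0)|\le C\|f\|$. For the particular integral the key observation is that $\eta^{-1}\Psi(\eta^{-1}t)$ is an $L_1$-normalized approximate identity (up to the constant $\int_\Real\Psi\,dt$), supported in $|t|\le\eta$, so the integral against the continuous kernel $k(t,s)u_\alpha(s)$ is bounded uniformly by $c\,\eta^{-1}\int_{-\eta}^{\eta}|\Psi(\eta^{-1}s)|\,ds\cdot|y(-0)|\le c\|f\|$, again invoking $|y(-0)|\le C\|f\|$. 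This yields \eqref{GenEstInfty}.

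For the boundary-derivative asymptotics the plan is to compute $g'_\en(1)$ directly. Since $\supp\Phi\subset\cI$ and the source is supported in $|t|\le\eta$, once $\eta<1$ the function $g_\en$ solves $g''=0$ for $t\in(\eta,1)$, hence is linear there, so $g'_\en(1)=g'_\en(\eta)$. Integrating \eqref{ProblGenInfty} once over $(-1,1)$ and using $g'_\en(-1)=y'(-0)$ together with $\int_{-1}^1\alpha\Phi g_\en\,dt$ and the source integral gives
\begin{equation*}
   g'_\en(1)=y'(-0)+\alpha\!\int_{-1}^1\!\Phi g_\en\,dt
   +\beta y(-0)\,\eta^{-1}\!\int_\Real\Psi(\eta^{-1}t)u_\alpha(t)\,dt.
\end{equation*}
I would then show the last integral tends to $\theta_\alpha^{-1}\mu_\alpha$-type quantity matching \eqref{YcondsInfty}: as $\eta\to0$, $\eta^{-1}\Psi(\eta^{-1}t)u_\alpha(t)$ concentrates at $t=0$, so the integral converges to $u_\alpha^2(0)\int_\Real\Psi\,dt$ divided by the appropriate normalization, with an $O(\eta)$ error coming from the modulus of continuity of $u_\alpha$ near the origin (recall $u_\alpha\in C^1$ there). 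The term $\alpha\int\Phi g_\en\,dt$ must be shown to combine with the homogeneous contribution to reproduce $y'(+0)=\theta_\alpha^{-1}y'(-0)+\beta\mu_\alpha y(-0)$ via the resonance relation for $u_\alpha$; this is where the multiplication-by-$u_\alpha$-and-integrate identity (as in Lemma~\ref{LemmaGenProperties}) enters, pairing $g_\en$ against $u_\alpha$ and exploiting $-u_\alpha''+\alpha\Phi u_\alpha=0$ with the Neumann data $u_\alpha'(\pm1)=0$.

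The main obstacle I anticipate is pinning down the $O(\eta)$ error in the concentration limit rather than merely proving convergence. One must bound $\eta^{-1}\int_\Real\Psi(\eta^{-1}t)\bigl(u_\alpha^2(t)-u_\alpha^2(0)\bigr)\,dt$ by $c\eta\|f\|$, which requires using that $u_\alpha$ is Lipschitz near $0$ so that $|u_\alpha^2(t)-u_\alpha^2(0)|\le c|t|\le c\eta$ on the support $|t|\le\eta$, yielding the stated rate after multiplying by the normalized mass of $\Psi$. The subtlety is that $g_\en$ itself appears inside $\alpha\int\Phi g_\en\,dt$, so I would first establish \eqref{GenEstInfty} to control that feedback term uniformly, and only then extract the sharp $O(\eta)$ rate for the source integral; keeping the two estimates separate avoids a circular dependence and makes the constant $C$ in \eqref{GenEstInfty} genuinely independent of $f$ and of the small parameters.
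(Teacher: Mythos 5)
Your proposal is, in its final form, essentially the paper's own proof. The uniform bound \eqref{GenEstInfty} rests on the variation-of-parameters representation \eqref{CPSolRepresentation}, the scale-invariant $L_1$ bound $\eta^{-1}\int_{-\eta}^{\eta}|\Psi(\eta^{-1}s)|\,ds=\int_{\cI}|\Psi(\tau)|\,d\tau$, and $|y(-0)|,|y'(-0)|\le C\|f\|$ from Proposition~\ref{PropEstYoverF} (the paper packages this slightly differently, via the decomposition $g_\en=y'(-0)g_0+\beta y(-0)\hat g_\en$ and $W_2^1(\cI)$-convergence of $\hat g_\en$ to the $\delta$-forced solution, but your direct kernel estimate is equally valid and arguably simpler); and the asymptotics of $g'_\en(1)$ come, exactly as in the paper, from multiplying \eqref{ProblGenInfty} by $u_\alpha$, integrating by parts with the Neumann data $u_\alpha'(\pm1)=0$ to obtain $\theta_\alpha g'_\en(1)-y'(-0)=\beta y(-0)\,\eta^{-1}\int_{-1}^{1}\Psi(\eta^{-1}s)\,u_\alpha^2(s)\,ds$, and then using the Lipschitz expansion $u_\alpha(t)=u_\alpha(0)+O(t)$ to extract the $O(\eta)\|f\|$ rate, with \eqref{YcondsInfty} and \eqref{MapMu} identifying the limit as $y'(+0)$.

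Two slips in your intermediate discussion should be deleted, although neither is used in your final argument: (i) on $(\eta,1)$ the function $g_\en$ solves $g''=\alpha\Phi g$, not $g''=0$, since $\Phi$ need not vanish there, so $g_\en$ is not linear on that interval and $g'_\en(1)\ne g'_\en(\eta)$ in general; (ii) the first-power source integral $\eta^{-1}\int_\Real\Psi(\eta^{-1}t)\,u_\alpha(t)\,dt$ concentrates to $u_\alpha(0)\int_\Real\Psi\,dt$, not to a $u_\alpha^2(0)$ quantity -- the square appears only after pairing with $u_\alpha$, which is precisely why the once-integrated identity with its uncontrolled term $\alpha\int_{-1}^{1}\Phi g_\en\,dt$ is a dead end and the $u_\alpha$-pairing is the right move, as you yourself conclude.
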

\begin{proof}
Our proof starts with the observation that the right-hand side of equation \eqref{ProblGenInfty} contains a $\delta$-like sequence, namely
\begin{equation}\label{DeltaLikeSeqInfty}
    \eta^{-1}\Psi(\eta^{-1} t)\to \left(\int_\Real \Psi\,dt\right)\delta(x)\quad \text{in } W_2^{-1}(\cI)
\end{equation}
as $\eta\to0$.
Let $g_0$ be the solution of \eqref{EqUalpha} obeying the initial conditions $g_0 (-1)=0$ and $g_0 '(-1)=1$.
Then  $g_\en$ can be represented as $g_\en=y'(-0)g_0 +\beta y(-0)\hat{g}_\en$,
where $\hat{g}_\en$ solves  the equation $g''-\alpha \Phi g=\eta^{-1}\Psi(\eta^{-1} \cdot\,)u_\alpha$ and satisfies  zero initial conditions at $t=-1$. Next, $\hat{g}_\en$ converges in $W_2^1(\cI)$ to the solution $\hat{g}$ of the problem
\begin{equation*}
    g''-\alpha \Phi(t)g= u_\alpha(0)\,\left(\int_\Real \Psi\,dt\right)\delta(x),\quad t\in\cI,\qquad
    g(-1)=0, \quad g'(-1)=0,
\end{equation*}
which is clear from the explicit representation of $\hat{g}_\en$ of the form \eqref{CPSolRepresentation}. Thus the convergence in $W_2^{1}(\cI)$ implies the uniform convergence of $\hat{g}_\en$ to $\hat{g}$ in  $\cI$, and consequently $\hat{g}_\en$ is  uniformly bounded  in $\eps$ and $\nu$ provided $\eta<c$.
From this we see that $ \|g_\en\|_{C(\cI)}\leq |y'(-0)|\,\|g_0 \|_{C(\cI)}+|\beta|\,|y(-0)|\,\|\hat{g}_\en\|_{C(\cI)}\leq C\|f\|$, by \eqref{EstY(pm0)}.

Multiplying equation \eqref{ProblGenInfty} by $u_\alpha$ and integrating on $\cI$ by parts  yield
\begin{equation*}
   \theta_\alpha g'_\en(1)- y'(-0)=
 \beta y(-0)\eta^{-1}\int_{-1}^1\Psi(\eta^{-1} s)u^2_\alpha(s)\,ds.
\end{equation*}
Since $u_\alpha(t)=u_\alpha(0)+O(t)$ as $t\to 0$, we have
\begin{multline*}
   g'_\en(1)=\theta_\alpha^{-1} \left(y'(-0)+ \beta y(-0)u^2_\alpha(0) \int_\Real\Psi\,ds\right)+ O(\eta)\|f\|\\
   =\theta_\alpha^{-1} y'(-0)+ \beta\mu_\alpha y(-0)+ O(\eta)\|f\|,\quad \eta\to 0,
\end{multline*}
by \eqref{DeltaLikeSeqInfty} and \eqref{MapMu}. Therefore the asymptotic relation for $g'_\en(1)$ follows from  \eqref{YcondsInfty}.
\end{proof}

\begin{lem}\label{LemmaHenPropertiesInfty}
There exist constants $C_1$ and $C_2$, independent of $f$, such that
\begin{align}\label{HenEstTInfty}
    &|h_\en(t)|\leq C_1(1+|t|)\|f\|, &&t\in\Real,\\\label{HenPrimeEstInfty}
    &|h'_\en(t)|\leq C_2\|f\|,  &&t\in\Real
\end{align}
for all  $\eps$ and $\nu$ whenever the ratio of $\nu$ to $\eps$ is small enough.
\end{lem}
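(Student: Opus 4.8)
The plan is to exploit the explicit integral representation of the solution to the Cauchy problem \eqref{ProblHenInfty}. Since $h_\en(-1)=h_\en'(-1)=0$ and the forcing term is $\Psi(t)g_\en(\eta t)$, integrating once and twice from $t=-1$ gives
\[
h_\en'(t)=\int_{-1}^t\Psi(s)g_\en(\eta s)\,ds,\qquad h_\en(t)=\int_{-1}^t(t-s)\Psi(s)g_\en(\eta s)\,ds.
\]
First I would record that $\supp\Psi\subset\cI$, so each integral in fact reduces to integration over $[-1,\min(t,1)]$; in particular $h_\en$ vanishes for $t\le-1$ and is affine for $t\ge1$, so it suffices to control the integrals over $\cI$.

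The crucial observation is that the rescaled argument $\eta s$ of $g_\en$ stays inside $\cI$: for $s\in\supp\Psi\subset[-1,1]$ and $\eta=\nu/\eps$ small enough (say $\eta\le1$) one has $\eta s\in[-\eta,\eta]\subset\cI$, which is precisely the interval on which the $\eps,\nu$-uniform bound $\|g_\en\|_{C(\cI)}\le C\|f\|$ from Lemma~\ref{LemmaGenPropertiesInfty} is available. Hence $|g_\en(\eta s)|\le C\|f\|$ on the support of $\Psi$, and
\[
|h_\en'(t)|\le\|\Psi\|_{L_1(\cI)}\,\|g_\en\|_{C(\cI)}\le C_2\|f\|,
\]
which establishes \eqref{HenPrimeEstInfty}. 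For \eqref{HenEstTInfty} I would bound $|t-s|\le|t|+1$ on the range of integration and obtain
\[
|h_\en(t)|\le(|t|+1)\,\|\Psi\|_{L_1(\cI)}\,\|g_\en\|_{C(\cI)}\le C_1(1+|t|)\|f\|,
\]
with constants independent of $f$, $\eps$, and $\nu$.

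The argument is essentially routine; the only point that genuinely requires care, and the reason the hypothesis that $\nu/\eps$ be small enters, is guaranteeing that the rescaled argument $\eta s$ remains within $\cI$, so that the uniform sup-bound on $g_\en$ provided by Lemma~\ref{LemmaGenPropertiesInfty} applies on the whole support of $\Psi$. Once that is secured, both inequalities follow at once from the $L_1$-bound on $\Psi$ together with that sup-bound, and the extension of the estimates to $t\notin\cI$ is immediate from the vanishing of $h_\en$ on $(-\infty,-1]$ and its affine behaviour on $[1,\infty)$.
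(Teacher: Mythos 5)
Your proposal is correct and follows essentially the same route as the paper: integrate the Cauchy problem \eqref{ProblHenInfty} twice from $t=-1$, observe that $h_\en$ vanishes for $t\le-1$ and is affine for $t\ge1$, and invoke the uniform bound \eqref{GenEstInfty} on $g_\en$ from Lemma~\ref{LemmaGenPropertiesInfty}, valid on the arguments $\eta s\in\cI$ once $\eta\le1$. The paper's proof is just a terser version of exactly this argument.
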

\begin{proof}
As in the proof of Lemma~\ref{LemmaGenProperties}, equation \eqref{ProblHenInfty} gives
\begin{equation*}
     h_\en(t)=t\int_{-1}^1\Psi(s)g_\en(\eta s)\,ds-\int_{-1}^1s\Psi(s)g_\en(\eta s)\,ds\quad \text{for } t\geq1
\end{equation*}
and $h_\en(t)=0$ for $t\leq-1$.
If $|\eta|\leq 1$, then  \eqref{HenEstTInfty},  \eqref{HenPrimeEstInfty} follow   from \eqref{GenEstInfty}.
\end{proof}

\begin{lem}\label{LemmaVenPropertiesInfty}
There exist constants $C$ independent of $f$ such that
\begin{equation}\label{VenEstInfty}
   \|v_\en\|_{C^1(\cI)}\leq C\eps^{-1/2}\|f\|
\end{equation}
for all $\eps$ and $\nu$ small enough.
\end{lem}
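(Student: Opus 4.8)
The statement to prove is that the solution $v_\en$ of the Cauchy problem \eqref{ProblVenInfty} satisfies $\|v_\en\|_{C^1(\cI)}\leq C\eps^{-1/2}\|f\|$ uniformly for small $\eps,\nu$. Let me think about what's going on here and how I'd approach it.

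The equation is
$$-v''+\alpha\Phi(t)v+\beta\eps\eta^{-1}\Psi(\eta^{-1}t)v=f(\eps t),\qquad v(-1)=0,\ v'(-1)=0.$$

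The key observation is that the right-hand side $f(\eps t)$, when measured in $L_\infty$ on $\cI$, does *not* give a usable bound — we only control $f$ in $L_2(\Real)$. But a change of variables reveals the scaling: $\int_{\cI}|f(\eps t)|\,dt = \eps^{-1}\int_{-\eps}^{\eps}|f(\tau)|\,d\tau \le \eps^{-1}(2\eps)^{1/2}\|f\| = \sqrt{2}\,\eps^{-1/2}\|f\|$, so $\|f(\eps\,\cdot\,)\|_{L_1(\cI)}\leq c\,\eps^{-1/2}\|f\|$. This is exactly the source of the $\eps^{-1/2}$ factor and tells me Proposition~\ref{PropCauhyProblEst} is NOT directly applicable, since that proposition needs $\|f\|_{L_\infty}$ on the right, which we cannot control.

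Let me think… The natural fix is to redo the variation-of-parameters argument but keep the inhomogeneity inside an $L_1$ integral rather than pulling out an $L_\infty$ norm.

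\medskip

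\textbf{Plan.}
First I would deal with the potential coefficient $\beta\eps\eta^{-1}\Psi(\eta^{-1}t)$. Writing $q_\en(t)=\alpha\Phi(t)+\beta\eps\eta^{-1}\Psi(\eta^{-1}t)$, I observe that although $\eta^{-1}=\eps/\nu$ may blow up, the prefactor $\eps$ keeps $\eps\eta^{-1}=\eps^2/\nu$ small (indeed $\eps/\nu$ bounded means $\eps^2/\nu=\eps\cdot(\eps/\nu)\to 0$), and $\Psi$ is bounded, so this term is $O(\eps^2/\nu)$ in $L_\infty(\cI)$ — in particular $\|q_\en\|_{L_\infty(\cI)}\leq C$ uniformly. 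Thus the coefficient $q_\en$ is uniformly bounded even though it is not convergent, which is all I need.

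Next, let $v_1,v_2$ be the fundamental solutions of $-v''+\alpha\Phi\,v=0$ with the usual initial data at $t=-1$, as in Proposition~\ref{PropCauhyProblEst}; these are fixed $C^1(\cI)$ functions independent of $\eps,\nu$. I would treat the full left-hand side by moving the $\Psi$-term to the right: rewrite \eqref{ProblVenInfty} as $-v''+\alpha\Phi v = f(\eps\,\cdot\,)-\beta\eps\eta^{-1}\Psi(\eta^{-1}\,\cdot\,)v$, and apply variation of parameters with kernel $k(t,s)=v_1(s)v_2(t)-v_1(t)v_2(s)$ exactly as in \eqref{CPSolRepresentation}. Since $\|k\|_{C^1(\cI\times\cI)}\leq c$ and $\bigl\|\tfrac{\partial k}{\partial t}\bigr\|_{C(\cI\times\cI)}\leq c$, I obtain a Volterra integral inequality of the form
$$\|v_\en\|_{C^1(\cI)}\leq c\,\|f(\eps\,\cdot\,)\|_{L_1(\cI)}+c\,\eps\eta^{-1}\|\Psi(\eta^{-1}\,\cdot\,)\|_{L_1(\cI)}\,\|v_\en\|_{C(\cI)}.$$
For the second $L_1$ norm, change variables: $\eps\eta^{-1}\int_{\cI}|\Psi(\eta^{-1}t)|\,dt=\eps\int_{\Real}|\Psi(\tau)|\,d\tau=O(\eps)$, which is small. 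Hence the coefficient of $\|v_\en\|_{C(\cI)}$ on the right is $\leq c\eps\leq 1/2$ for $\eps$ small, and I can absorb that term into the left-hand side. Combining with the first-norm bound $\|f(\eps\,\cdot\,)\|_{L_1(\cI)}\leq c\eps^{-1/2}\|f\|$ derived above gives $\|v_\en\|_{C^1(\cI)}\leq C\eps^{-1/2}\|f\|$, as required.

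\medskip

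\textbf{Main obstacle.} The one genuinely delicate point is the absorption step: I must verify that the self-interaction term $\beta\eps\eta^{-1}\Psi(\eta^{-1}\,\cdot\,)v_\en$ really is a small perturbation. This hinges on the cancellation $\eps\eta^{-1}\cdot\|\Psi(\eta^{-1}\,\cdot\,)\|_{L_1(\cI)}=\eps\|\Psi\|_{L_1(\Real)}=O(\eps)$, where the dangerous factor $\eta^{-1}$ is exactly killed by the $L_1$-rescaling of the $\Psi$-peak. If instead one tried to bound the $\Psi$-term in $L_\infty$, the factor would be $\eps\eta^{-1}=\eps^2/\nu$, which is small under the hypothesis $\eps/\nu$ bounded but not transparently so; the $L_1$ route is cleaner and, crucially, is uniform over all regimes with $\nu/\eps$ bounded. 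Everything else is a routine Gronwall/Volterra estimate built on Proposition~\ref{PropCauhyProblEst}'s fundamental-solution bounds.
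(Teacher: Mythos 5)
Your proof is correct and is essentially the paper's own argument: the paper uses exactly the same ingredients — the kernel $k(t,s)$ built from the fundamental solutions of $-v''+\alpha\Phi v=0$, the Cauchy--Schwarz bound $\|f(\eps\,\cdot\,)\|_{L_1(\cI)}\leq c\,\eps^{-1/2}\|f\|$, the rescaling $\eps\eta^{-1}\int_{-1}^{1}|\Psi(\eta^{-1}s)|\,ds\leq\eps\int_{\Real}|\Psi(\tau)|\,d\tau$, and absorption of the resulting $O(\eps)\|v_\en\|_{C^1(\cI)}$ term for $\eps$ small — merely packaged as a splitting $v_\en=v_\eps+\vartheta_\en$ (with $v_\eps$ solving the problem without the $\Psi$-term and $\vartheta_\en$ carrying it) instead of your single Volterra inequality, a purely cosmetic difference. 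One caveat: your unused side remark that the coefficient $q_\en=\alpha\Phi+\beta\eps\eta^{-1}\Psi(\eta^{-1}\,\cdot\,)$ is uniformly bounded in $L_\infty(\cI)$ is false in this section's regime $\nu/\eps\to 0$ (take $\nu=\eps^3$, so that $\eps\eta^{-1}=\eps^{-1}\to\infty$), but since your actual estimate moves the $\Psi$-term to the right-hand side and controls it only in $L_1$, this slip does not affect the proof.
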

\begin{proof}
  Let $v_\eps$ be a solution of the auxiliary Cauchy problem
  \begin{equation*}
     -v''_\eps+\alpha\Phi(t)v_\eps=f(\eps t),\quad t\in \Real,\quad
     v_\eps(-1)=0,\quad v'_\eps(-1)=0.
  \end{equation*}
In view of Proposition~\ref{PropCauhyProblEst} we have
  \begin{equation*}
    v_\eps(t)=\int_{-1}^t k(t,s) f(\eps s)\,ds,
  \end{equation*}
where $k=k(t,s)$  is a continuously differentiable function on $\Real^2$.
Therefore
  \begin{equation}\label{AuxVepsEsrInfty}
    \|v_\eps\|_{C^1(\cI)}\leq c_1 \|k\|_{C^1(\cI\times\cI)}\int_{-1}^1|f(\eps s)|\,ds
    \leq c_2\eps^{-1} \int_{-\eps}^\eps|f(\tau)|\,d\tau
    \leq c_3\eps^{-1/2}\|f\|.
  \end{equation}
Next, the function $\vartheta_\en=v_\en-v_\eps$ solves the problem
 \begin{equation*}
     -\vartheta''_\eps+\alpha\Phi(t)\vartheta_\eps=-\beta\eps\eta^{-1}\, \Psi(\eta^{-1} t)v_\en,\quad t\in \Real,\quad
     \vartheta_\eps(-1)=0,\quad \vartheta'_\eps(-1)=0.
  \end{equation*}
We conclude from this that
\begin{multline*}
   \|\vartheta_\en\|_{C^1(\cI)}\leq c_4 \eps\eta^{-1}\|k\|_{C^1(\cI\times\cI)}\int_{-1}^1 |\Psi(\eta^{-1} s)||v_\en(s)|\,ds\\
   \leq c_5 \eps\|v_\en\|_{C^1(\cI)}\,\eta^{-1}\int_{-1}^1 |\Psi(\eta^{-1} s)|\,ds
   \\
   \leq
   c_5 \eps\|v_\en\|_{C^1(\cI)}\int_\Real |\Psi(\tau)|\,d\tau\leq c_6\eps\|v_\en\|_{C^1(\cI)}.
\end{multline*}
Hence,  $\|v_\en-v_\eps\|_{C^1(\cI)}\leq c_6\eps\|v_\en\|_{C^1(\cI)}$, and consequently
$$(1-c_6\eps)\|v_\en\|_{C^1(\cI)}\leq \|v_\eps\|_{C^1(\cI)}.$$
That $ \|v_\en\|_{C^1(\cI)}\leq C\eps^{-1/2}\|f\|$ follows from estimate \eqref{AuxVepsEsrInfty} for $\eps$ small enough.
\end{proof}

Lemmas \ref{LemmaGenPropertiesInfty}--\ref{LemmaVenPropertiesInfty} have the following corollary.
\begin{cor}\label{CorWenSmallInfty}
The function $y_\en$ is bounded in $[-\eps,\eps]$ uniformly in $\eps$ and $\nu$ provided  $\nu/\eps\leq 1$, and
   $\max_{|x|\leq \eps} |y_\en(x)|\leq C\|f\|$ with some constant $C$ being independent of $f$.
\end{cor}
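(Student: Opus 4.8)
The plan is to bound, on the interval $[-\eps,\eps]$, each of the four terms appearing in the definition \eqref{ApproxWeInfty} of $y_\en$, exploiting that for $|x|\leq \eps$ the rescaled argument $x/\eps$ lies in $\cI=[-1,1]$. The leading term $y(-0)u_\alpha(x/\eps)$ will carry the entire bound: since the half-bound state $u_\alpha$ belongs to $L_\infty(\Real)$ and $|y(-0)|\leq C\|f\|$ by Proposition~\ref{PropEstYoverF}, one gets at once $|y(-0)u_\alpha(x/\eps)|\leq C\|f\|$. The remaining three terms should all be shown to be small, so that the corollary follows as a direct transcription of the three preceding lemmas, exactly as in the proof of Corollary~\ref{CorWenSmall}.

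For the second term, since $x/\eps\in\cI$, Lemma~\ref{LemmaGenPropertiesInfty} gives $|\eps g_\en(x/\eps)|\leq \eps\|g_\en\|_{C(\cI)}\leq C\eps\|f\|$. For the fourth term, Lemma~\ref{LemmaVenPropertiesInfty} yields $|\eps^2 v_\en(x/\eps)|\leq \eps^2\|v_\en\|_{C(\cI)}\leq C\eps^{3/2}\|f\|$. Both are $o(1)\|f\|$ as $\eps\to0$.

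The one term that requires attention to the two-scale structure is $\beta\nu\eps h_\en(x/\nu)$, and this is where I expect the only genuine point of care: for $|x|\leq\eps$ the argument $x/\nu$ ranges up to $\eps/\nu=\eta^{-1}$, which is large when $\eta=\nu/\eps\to0$, so the crude $L_\infty$-bound on $h_\en$ is not available. Here I would instead invoke the linear-growth estimate \eqref{HenEstTInfty} of Lemma~\ref{LemmaHenPropertiesInfty}, namely $|h_\en(x/\nu)|\leq C_1(1+|x|/\nu)\|f\|$, so that
$$
  |\beta\nu\eps h_\en(x/\nu)|\leq |\beta|C_1\eps(\nu+\eps)\|f\|\leq C\eps^2\|f\|,
$$
where the last step uses the standing hypothesis $\nu\leq\eps$. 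Thus this term is $o(1)\|f\|$ as well, the factors $\nu\eps$ in front exactly absorbing the growth of $h_\en$ across the wide window $|x/\nu|\leq\eta^{-1}$.

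Collecting the four bounds gives $\max_{|x|\leq\eps}|y_\en(x)|\leq C\|f\|$ uniformly in $\eps,\nu$ with $\nu/\eps\leq 1$, the three correction terms being negligible and the main contribution coming solely from $y(-0)u_\alpha(x/\eps)$. Everything beyond the $h_\en$ scaling is routine, being an immediate consequence of Lemmas~\ref{LemmaGenPropertiesInfty}--\ref{LemmaVenPropertiesInfty} and Proposition~\ref{PropEstYoverF}.
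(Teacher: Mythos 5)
Your proof is correct and follows exactly the route the paper intends: the paper states this corollary without proof as a direct consequence of Lemmas~\ref{LemmaGenPropertiesInfty}--\ref{LemmaVenPropertiesInfty} (mirroring the short proof of Corollary~\ref{CorWenSmall}), and your term-by-term bounds supply precisely those details. In particular, your treatment of the one non-routine term --- using the linear-growth estimate \eqref{HenEstTInfty} so that the prefactor $\nu\eps$ absorbs the growth of $h_\en$ over the window $|x/\nu|\leq\eta^{-1}$, with $\nu\leq\eps$ giving the bound $C\eps^2\|f\|$ --- is exactly the point the hypothesis $\nu/\eps\leq 1$ is there for.
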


The function $w_\en=y_\en-r_\en$ and its first derivative have  the jumps at $x=\pm \eps$:
\begin{align*}
    &[w_\en]_{-\eps}=y(-0)-y(-\eps),\qquad     [w'_\en]_{-\eps}=y'(-0)-y'(-\eps),\\
    &[w_\en]_{\eps}=y(\eps)-\theta_\alpha y(-0)-\eps g_\en(1)-\beta\nu\eps\, h_\en(\eta^{-1})-\eps^2 v_\en(1),\\
    &[w'_\en]_{\eps}=y'(\eps)-g'_\en(1)- \eps  (\beta\,h'_\en(\eta^{-1})+ v'_\en(1)).
\end{align*}
In view of \eqref{EstY(t)-Y(0)}, \eqref{GenEstInfty}, \eqref{HenEstTInfty}, \eqref{VenEstInfty}, and \eqref{YcondsInfty}, we conclude  that  three of the jumps can be bounded by $c_1\eps^{1/2}\|f\|$. As for the last one, we have
\begin{equation*}
    \left|[w'_\en]_{\eps}\right|\leq |y'(\eps)-y'(+0)|+ c_2\eta\|f\|+c_3\eps (|h'_\en(\eta)|+ |v'_\en(1)|)
    \leq c_2(\eps^{1/2}+\eta)\|f\|,
\end{equation*}
by  \eqref{HenPrimeEstInfty}, \eqref{VenEstInfty},  and Lemma \ref{LemmaGenPropertiesInfty}. We can now repeatedly apply  Proposition~\ref{PropW22Corrector} to deduce
\begin{equation}\label{RenEstInfty}
    \max_{x\in \Real\setminus\{-\eps,\eps\}}\bigl|r^{(k)}_\en(x)\bigr|\leq C\sigma(\eps,\eta)\|f\|
\end{equation}
for $k=0,1,2$, where $\sigma(\eps,\eta)=\eps^{1/2}+\eta$.

\begin{proof}[Proof of Theorem~\ref{ThmCaseEpsNu-1GoInfty}]
As in the proof of Theorem~\ref{ThmCaseEpsNu-1Go0} we introduce the notation $f_\en=(S_\en-z)y_\en$.
It is easy to check that $f_\en(x)=f(x)-r''_\en(x)-z r_\en(x)$ for $|x|>\eps$.  Next, for $|x|<\eps$, we have
\begin{align*}\textstyle
  f_\en(x)=&\left(-\tfrac{d^2}{dx^2}+\alpha\eps^{-2}\Phi\left(\tfrac{x}{\eps}\right)
    +\beta\nu^{-1}\Psi\left(\tfrac{x}{\nu}\right)-z\right)y_\en(x)
\\
     = &\eps^{-2} y(-0)\Bigl\{-u''_\alpha\left(\tfrac{x}{\eps}\right)+\alpha \Phi\left(\tfrac x\eps\right)u_\alpha\left(\tfrac{x}{\eps}\right)\Bigr\}
\\
    +&\eps^{-1} \Bigl\{ -g_\en''\left(\tfrac{x}{\eps}\right)+\alpha \Phi\left(\tfrac x\eps\right)g_\en\left(\tfrac{x}{\eps}\right)+
    \beta \eta^{-1}y(-0)\Psi\left(\tfrac x\nu\right) u_\alpha\left(\tfrac x\eps\right)\Bigr\}
\\
    +&\beta\eta^{-1}  \Bigl\{-h''_\en\left(\tfrac x\nu\right)+ \Psi\left(\tfrac x\nu\right) g_\en\left(\tfrac x\eps\right)\Bigl\}
\\
       +&\Bigl\{-v''_\en\left(\tfrac{x}{\eps}\right)+\alpha\Phi\left(\tfrac x\eps\right)v_\en\left(\tfrac{x}{\eps}\right)+\beta\eps^2\nu^{-1}\, \Psi\left(\tfrac x\nu\right)v_\en\left(\tfrac{x}{\eps}\right)\Bigr\}
\\
    +&\alpha\beta \eta\, \Phi\left(\tfrac x\eps\right)h_\en\left(\tfrac x\nu\right)
    +\beta^2\eps \,\Psi\left(\tfrac x\nu\right)h_\en\left(\tfrac x\nu\right)- z y_\en(x)
   \\ =&f(x)+\Bigl\{\alpha \eta\, \Phi\left(\tfrac x\eps\right)
   +\beta\eps\, \Psi\left(\tfrac x\nu\right)\Bigr\}\beta h_\en\left(\tfrac x\nu\right)
   - z y_\en(x),
\end{align*}
since  $u_\alpha$, $g_\en$, $h_\en$, and $v_\en$ are solutions to equations \eqref{EqUalpha} and \eqref{ProblGenInfty}--\eqref{ProblVenInfty} respectively.
Then $f_\en=f-q_\en$, where
$$
    q_\en=r''_\en+z r_\en+z y_\en\chi_\eps-\left(\alpha \eta \Phi(\eps^{-1}\,\cdot\,)
   +\beta\eps \Psi(\nu^{-1}\,\cdot\,)\right)\beta h_\en(\nu^{-1}\,\cdot\,).
$$
As above,  $\chi_\eps$ is the characteristic function of $[-\eps,\eps]$.
Consequently, we conclude from Lemma~\ref{LemmaHenPropertiesInfty}  that
\begin{align*}
&\begin{aligned}
\eta \left|\Phi\left(\tfrac x\eps\right)h_\en\left(\tfrac x\nu\right)\right|\leq
c_1\eta\chi_\eps(x)\max_{|x|\leq\eps}&|h_\en\left(\tfrac x\nu\right)|\\
&\leq c_2\eta (1+\eta^{-1})\|f\|\,\chi_\eps(x)\leq c_3\|f\|\,\chi_\eps(x),
\end{aligned}
    \\
&\eps \left|\Psi\left(\tfrac x\nu\right) h_\en\left(\tfrac x\nu\right)\right|\leq
      c_4\eps\chi_\nu(x)\max_{|x|\leq\nu}|h_\en\left(\tfrac x\nu\right)|\leq c_5\eps\|f\|\,\chi_\nu(x),
\end{align*}
hence that $\|q_\en\|\leq c\sigma(\eps,\eta)\|f\|$, in view of Corollary~\ref{CorWenSmallInfty} and estimate \eqref{RenEstInfty}.
Thus $y_\en=(S_\en-z)^{-1}f+(S_\en-z)^{-1}q_\en$, and therefore
\begin{equation*}
     \|(S_\en-z)^{-1}f-y_\en\|\leq\|(S_\en-z)^{-1}\|\|q_\en\|\leq c_6\sigma(\eps,\eta)\|f\|.
\end{equation*}
By arguments that are completely analogous to those presented in the proof of Theorem~\ref{ThmCaseEpsNu-1Go0} we
conclude that $\|(S(\theta_\alpha, \beta\mu_\alpha)-z)^{-1}f-y_\en\|\leq C\sigma(\eps,\eta)\|f\|$, and finally that operators $S_\en$ converge to $S(\theta_\alpha, \beta\mu_\alpha)$ in the norm resolvent sense as $\eps$ and $\eta$ tend to zero.
\end{proof}

\subsection{Non-resonant case}
 Assume  $\alpha$ does not belongs to the resonant set $\Lambda_\Phi$, and write $y=(S_-\oplus S_+-z)^{-1}f$ for $f\in L_2(\Real)$.

\begin{thm}\label{ThmCaseEpsNu-1GoInftyNR}
    If $\alpha\not\in\Lambda_\Phi$,  then the operator family $S_\en$ defined by \eqref{Sen} converges to the direct sum  $S_-\oplus S_+$  in the norm resolvent sense as $\eps, \eta\to 0$.
\end{thm}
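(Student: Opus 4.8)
The plan is to repeat the scheme of the resonant Theorem~\ref{ThmCaseEpsNu-1GoInfty}, exploiting the two simplifications afforded by the non-resonant hypothesis: equation \eqref{EqUalpha} has no non-trivial bounded solution, so the leading profile $u_\alpha$ drops out of the ansatz, and the resolvent $y=(S_-\oplus S_+-z)^{-1}f$ obeys the Dirichlet condition $y(-0)=y(+0)=0$. Accordingly I would look for an approximation
\begin{equation*}
  y_\en(x)=
  \begin{cases}
    y(x)+r_\en(x)&\text{for }|x|>\eps,\\
    \eps g( x/\eps)+\beta\nu\eps\, h_\en(x/\nu)+\eps^2 v_\en(x/\eps)&\text{for }|x|\leq\eps,
  \end{cases}
\end{equation*}
obtained from \eqref{ApproxWeInfty} by suppressing the term $y(-0)u_\alpha(x/\eps)$ and replacing the Cauchy profile $g_\en$ of \eqref{ProblGenInfty} by the two-point solution $g$ of the \emph{homogeneous} problem $g''-\alpha\Phi g=0$ on $\cI$ with $g'(-1)=y'(-0)$, $g'(1)=y'(+0)$, continued linearly outside $\cI$; here $h_\en$ solves $h''=\Psi(t)g(\eta t)$ under the initial data of \eqref{ProblHenInfty}, and $v_\en$ is exactly the solution of \eqref{ProblVenInfty}. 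The corrector $r_\en$ of the form \eqref{CorrectoR}, vanishing on $(-\eps,\eps)$, is furnished by Proposition~\ref{PropW22Corrector}.

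The first point to secure is that the ansatz is well defined. The two-point problem for $g$ is uniquely solvable \emph{precisely because} $\alpha\notin\Lambda_\Phi$, i.e.\ $\alpha$ is not an eigenvalue of the Neumann problem \eqref{NeumanProblemWithAlpha}; its solution is then bounded in $C^1(\cI)$ by $C(|y'(-0)|+|y'(+0)|)\le C\|f\|$ through Proposition~\ref{PropEstYoverF}. Since $g$ is bounded, the estimates of Lemma~\ref{LemmaHenPropertiesInfty} for $h_\en$ carry over verbatim, and the bound $\|v_\en\|_{C^1(\cI)}\le C\eps^{-1/2}\|f\|$ of Lemma~\ref{LemmaVenPropertiesInfty} is unchanged. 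Using $y(\pm0)=0$ together with \eqref{EstY(t)-Y(0)} and the exact matching $g'(-1)=y'(-0)$, $g'(1)=y'(+0)$, I would check that the four jumps of $w_\en=y_\en-r_\en$ and of $w'_\en$ at $x=\pm\eps$ are each $O(\sigma(\eps,\eta))\|f\|$, whence Proposition~\ref{PropW22Corrector} yields the bound \eqref{RenEstInfty} for $r_\en$.

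The decisive step is the computation $(S_\en-z)y_\en=f-q_\en$, performed exactly as for Theorem~\ref{ThmCaseEpsNu-1GoInfty}. Because every term of that calculation carrying the factor $y(-0)$ now vanishes, the three equations for $g$, $h_\en$ and $v_\en$ annihilate the $\eps^{-2}$-, $\eps^{-1}$- and $\eta^{-1}$-contributions, leaving only $q_\en=r''_\en+zr_\en+zy_\en\chi_\eps$ together with the lower-order terms $\bigl(\alpha\eta\,\Phi(\cdot/\eps)+\beta\eps\,\Psi(\cdot/\nu)\bigr)\beta h_\en(\cdot/\nu)$. Each of these has $L_2$-norm $O(\sigma(\eps,\eta))\|f\|$, so $\|(S_\en-z)^{-1}f-y_\en\|\le\|(S_\en-z)^{-1}\|\,\|q_\en\|\le C\sigma(\eps,\eta)\|f\|$; combined with $\|y_\en-y\|\le C\sigma(\eps,\eta)\|f\|$ this gives $\|(S_\en-z)^{-1}f-(S_-\oplus S_+-z)^{-1}f\|\le C\sigma(\eps,\eta)\|f\|$, and letting $\eps,\eta\to0$ completes the proof.

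I expect the crux to be the same delicate cancellation as in the resonant case. Rescaled to the $\eps$-variable, the potential $\beta\nu^{-1}\Psi(\cdot/\nu)$ acts on the smooth profile $\eps g$ as a $\delta$-like forcing of amplitude $\eta^{-1}\to\infty$, and one must verify that the corrector $h_\en$ (governed by $h''=\Psi\,g(\eta\,\cdot)$) absorbs it \emph{exactly} at leading order, so that no residual coupling between the two half-lines is generated. The reason this succeeds — and the reason the limit is the decoupled pair $S_-\oplus S_+$ rather than a non-trivial point interaction — is the Dirichlet condition $y(0)=0$: it kills the leading interaction $\propto y(-0)u_\alpha$ which, in the resonant case, produced the surviving coefficient $\beta\mu_\alpha$ in the second relation of \eqref{YcondsInfty}.
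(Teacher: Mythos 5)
Your proposal is correct and follows essentially the same route as the paper: the paper's own proof uses exactly this simplified ansatz (dropping the $y(-0)u_\alpha$ term, taking $g$ as the solution of the homogeneous equation with Neumann data $g'(-1)=y'(-0)$, $g'(1)=y'(+0)$, solvable since $\alpha\not\in\Lambda_\Phi$, together with $h_\en$, $v_\en$, and the corrector $r_\en$), and then refers back to the resonant proof for the residual estimate, which is the computation you carried out. Your identification of the surviving residual terms and of the role of the Dirichlet condition $y(0)=0$ matches the paper's argument.
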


\begin{proof} In this case the approximation $y_\en$ may be greatly simplified,  since $y(0)=0$.
Looking at  asymptotics \eqref{ApproxWeInfty}, we set
\begin{equation*}
y_\en(x)=
\begin{cases}
  y(x)+r_\en(x)\quad &\text{for }|x|>\eps,\\
  \eps g( x/\eps)+\beta\nu\eps\, h_\en(x/\nu)+\eps^2 v_\en(x/\eps)\quad &\text{for }|x|\leq\eps,
\end{cases}
\end{equation*}
where $g$ and $h_\en$ are  solutions to the problems
\begin{align*}
    & g''-\alpha \Phi(t)g=0,\quad t\in\Real, && g'(-1)=y'(-0), \quad g'(1)=y'(0);
\\
    & h''= \Psi(t)g(\eta t),\quad t\in\Real, &&     h(-1)=0,\quad h'(-1)=0
\end{align*}
respectively. As above, $v_\en$ is a solution of \eqref{ProblVenInfty}, and the corrector function $r_\en$ is of the form \eqref{CorrectoR} and provides the inclusion
$y_\en\in W_2^2(\Real)$.
The rest of the proof is similar to the proof of Theorem~\ref{ThmCaseEpsNu-1GoInfty}.
\end{proof}

\emph{Acknowledgements.} I would like to thank Rostyslav Hryniv and Alexander Zolotaryuk for stimulating
discussions.

\end{document}